\theoremstyle{plain}
\newtheorem{theorem}{Theorem}
\newtheorem{proposition}[theorem]{Proposition}
\newtheorem{lemma}[theorem]{Lemma}
\theoremstyle{definition}
\newtheorem{remark}[subsection]{Remark}
\newtheorem{nothing*}[subsection]{}
\theoremstyle{remark}
\newcommand{\rien}[1]{}
\newcommand{\C}{\ensuremath{\mathbb{C}}}
\newcommand{\B}{\ensuremath{\mathbb{B}}}
\newcommand{\N}{\ensuremath{\mathbb{N}}}
\def\e{\epsilon}
\renewcommand{\epsilon}{\varepsilon}
\renewcommand{\phi}{\varphi}
\renewcommand{\emptyset}{\varnothing}
\title{Automorphisms of $\C^m$ with bounded wandering domains}
\author{Luka Boc Thaler}
\begin{document}

%\thanks{  Supported by the research program P1-0291 from ARRS, Republic of Slovenia}
\address{L. Boc Thaler: Faculty of Education, University of Ljubljana, SI--1000 Ljubljana, Slovenia.} \email{luka.boc@pef.uni-lj.si}

\begin{abstract}
 We prove that the Euclidean ball can be realized as a Fatou component of a holomorphic automorphism of  $\C^m$, in particular as the escaping and the oscillating wandering domain. Moreover, the same is true for a large class of bounded domains, namely for all bounded regular open sets $\Omega\subset \C^m$ whose closure is polynomially convex.  Our result gives in particular the first example of a bounded Fatou component with a smooth boundary in the category of holomorphic automorphisms. 
\end{abstract}
%\renewcommand{\baselinestretch}{1.07}
%%%%%%%  TOPMATTER:   %%%%%%%%%%%%%%%%%%%%%%%%%

\subjclass[2010]{32H50,	32M17}
%\date{April 15, 2013}
\keywords{holomorphic dynamics, automorphisms, Fatou components, wandering domains, polynomially convex, Anders\'en-Lempert theory,}

\vfuzz=2pt
%%%%%%%%%%%%%%%%%%%%%%%%%%%%%%%%%%%%%%%%%%%%%%%%%%%%%%%%%%%%%%%%%%%
%%%%%%%%%%%%%%%%%%%%%%%%%%%%%%%%%%%%%%%%%%%%%%%%%%%%%%%%%%%%%%%%%%%
%%%%%%%%%%%%%%%%%%%%%%%%%%%%%%%%%%%%%%%%%%%%%%%%%%%%%%%%%%%%%%%%%%%

\vskip 1cm

\maketitle
%\tableofcontents

\section{Introduction}
With the emergence of new techniques, the study of wandering domains has flourished in recent years. Many strong results have been established in the category of transcendental entire functions \cite{Ba,Bi,EL,BEGRS,MPS} and in the category of holomorphic endomorphisms of $\C\mathbb{P}^2$ \cite{ABDPR, AsBTP} especially about the existence and the geometry of wandering domains. On the other hand there are only few known results about wandering domains in the category of holomorphic automorphisms of $\C^m$ for $m\geq2$, some of which will be presented later in the introduction. The aim of this paper is to study the geometry of wandering domains for holomorphic automorphisms. In particular we investigate which bounded domains can be realized as a wandering Fatou component of an automorphism. The following are the main results of this paper.

\begin{theorem}\label{thmescaping}For every $m\geq2$ there exists an automorphism of $\C^m$ with an escaping wandering domain equal to the Euclidean ball. 
\end{theorem}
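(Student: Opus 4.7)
The plan is an inductive Andersén-Lempert construction of the desired automorphism as a limit of approximants. Fix a sequence $p_n \in \C^m$ with $\|p_n\|$ growing fast enough along, say, the first coordinate axis that each finite union $\overline{B} \cup \overline{B_1} \cup \cdots \cup \overline{B_k}$, with $B_j := p_j + B$ and $B = B(0,1)$, is a pairwise disjoint, polynomially convex compactum. This is the setup to which Andersén-Lempert approximation will be applied at each stage.

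The inductive step produces $F_k \in \Aut(\C^m)$ with three properties on an exhausting compactum $L_k$: (i) $F_k$ and $F_k^{-1}$ lie within $2^{-k}$ of $F_{k-1}$ and $F_{k-1}^{-1}$ respectively on $L_k$, so that $F := \lim_k F_k$ is an automorphism of $\C^m$; (ii) $F_k^j(\overline B)$ lies in a $2^{-k}$-neighborhood of $\overline{B_j}$ for every $j \leq k$; (iii) $F_k^j$ sends the shell $\{z : 2^{-k} \leq d(z,\overline{B}) \leq 1\}$ out of a fixed open neighborhood of $\overline{B_j}$, for every $j \leq k$. At each stage one exhibits an isotopy of biholomorphisms on the polynomially convex compactum that agrees with the previously built $F_{k-1}$ on the already-controlled orbit of $\overline B$ and maps $\overline{B_{k-1}}$ onto $\overline{B_k}$, then approximates it by a global automorphism via Andersén-Lempert.

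Taking limits, condition (ii) survives as $F^j(\overline{B})$ being arbitrarily close to $\overline{B_j}$, so $\{F^n\}$ is a normal family on $B$ escaping to infinity; hence $B$ lies in an escaping wandering Fatou component $\Omega$. To show $\Omega = B$, suppose for contradiction that there exists $q \in \Omega \setminus \overline{B}$. Connectedness of $\Omega$ and normality of $\{F^n\}$ on $\Omega$ force $F^n(q) \to \infty$ in tandem with $F^n|_B$, so that $F^n(q)$ accumulates on the trajectory $\{\overline{B_n}\}$. But $q$ is trapped in some shell $\{2^{-k_0} \leq d(\cdot,\overline{B}) \leq 1\}$, and condition (iii) keeps $F^n(q)$ at uniform positive distance from $\overline{B_n}$ for all $n \geq k_0$, contradicting the accumulation.

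The main obstacle is precisely arranging condition (iii)---that is, ensuring that \emph{exactly} $B$, rather than a strictly larger open set, is the Fatou component. Achieving this requires, at each Andersén-Lempert step, choosing the interpolating isotopy so as to ``stretch'' the exterior of $\overline B$ away from the trajectory of $B$, exploiting the smoothness of $\partial B$ to produce a uniform separation along the outward normal directions. The remainder of the proof is a diagonal convergence argument and standard normality considerations in $\Aut(\C^m)$.
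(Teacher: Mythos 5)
There is a genuine gap, and it sits exactly where you locate the ``main obstacle'': condition (iii) is not just hard to arrange, it is incompatible with condition (ii) in the limit, and the induction cannot maintain it. Fix $j$ and let $U_j$ be your fixed open neighborhood of $\overline{B_j}$. Since the shells $\{2^{-k}\le d(z,\overline B)\le 1\}$ exhaust $\{0<d(z,\overline B)\le 1\}$ as $k\to\infty$ while $j$ stays fixed, passing to the limit in (iii) gives $F^j(q)\in\C^m\setminus U_j$ for \emph{every} $q$ with $0<d(q,\overline B)\le 1$, whereas (ii) gives $F^j(\overline B)\subset\overline{B_j}\subset U_j$. Continuity of the automorphism $F^j$ at a point of $\partial B$, approached from outside, is then violated. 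The same problem kills the induction at finite stage: once $F_{k}$ is built, continuity of $F_{k}^j$ forces some collar $\{0<d(z,\overline B)<\delta_k\}$ to map into a small neighborhood of $\overline{B_j}$; at the later stage $K$ with $2^{-K}<\delta_k$ this collar lies inside the new shell, and since $F_K^j$ must stay uniformly close to $F_k^j$ there (this closeness is what your condition (i) and the convergence of the scheme require), you cannot push its image out of $U_j$. If instead you let the excluded neighborhoods shrink with $k$, the limit statement ``$F^n(q)$ stays at uniform positive distance from $\overline{B_n}$'' is lost. Separately, even granting (iii), your final contradiction does not close: normality plus connectedness of $\Omega$ only give, by chaining equicontinuity along a compact path from $B$ to $q$, that $d\bigl(F^n(q),F^n(B)\bigr)$ stays \emph{bounded}; they do not force $F^n(q)$ to accumulate on the trajectory $\{\overline{B_n}\}$, so being excluded from a neighborhood of $\overline{B_n}$ contradicts nothing.

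The paper avoids controlling the images of all exterior points (which, as above, is impossible) and instead controls countably many of them: it plants attracting fixed points $S_j$ away from the orbit and chooses points $T_0^j$ accumulating densely on $\partial\B(P_0,1)$ whose finite orbits are steered, via the Anders\'en--Lempert interpolation at the points $T^j_n$, into the basins $\B(S_j,1)$, so $F^n(T_0^j)\to S_j$. If the Fatou component $\mathcal{F}_0$ containing the ball were strictly larger, being open and connected it would contain some $T_0^j$; but near $T_0^j$ the iterates converge to the constant $S_j$ while on $\B(P_0,1)$ they diverge to infinity, contradicting equicontinuity on $\mathcal{F}_0$. If you want to salvage your scheme, replace your condition (iii) by this kind of requirement: interpolation data at finitely many boundary-accumulating points (plus fixed points with attracting linear part, your $q_2$-type maps), which Theorem~\ref{al} can honor jointly with the approximation along the orbit of the ball.
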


\begin{theorem}\label{thmoscillating}For every $m\geq2$ there exists an automorphism of $\C^m$ with an oscillating wandering domain equal to the Euclidean ball. 
\end{theorem}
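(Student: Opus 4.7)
The plan is to adapt the construction used for Theorem~\ref{thmescaping} by replacing the escaping sequence of ball centers with an oscillating one. First I would fix a sequence of pairwise distinct points $\{a_n\}_{n\geq 0}\subset\C^m$ with the following four properties: one subsequence satisfies $\|a_{n_k}\|\to\infty$; another subsequence converges to a point $p_\infty\in\C^m$ distinct from every $a_n$; the closed balls $\overline{B(a_n,1+\delta)}$ are pairwise disjoint for some $\delta>0$; and the compact set $\{a_n\}_{n\geq 0}\cup\{p_\infty\}$ is polynomially convex. The existence of such a sequence is an elementary point-set construction, since a countable subset of $\C^m$ with a single accumulation point is automatically polynomially convex.

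Next I would replay the telescoping Andersén--Lempert argument from Theorem~\ref{thmescaping}, inductively building an automorphism $F\in\Aut(\C^m)$ that on a neighborhood of each $\overline{B(a_n,1)}$ is within $\epsilon_n$ of the translation $z\mapsto z-a_n+a_{n+1}$. At stage $n$ one applies the Andersén--Lempert approximation theorem on the polynomially convex compactum
\[ K_n \;=\; \bigcup_{k\leq n}\overline{B(a_k,1)}\,\cup\,\{p_\infty\}, \]
whose polynomial convexity follows from the polynomial convexity of the center set together with the fact that a pairwise separated union of polynomially convex sets, with controlled accumulation on a polynomially convex compactum, is again polynomially convex. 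Choosing $\sum\epsilon_n<\infty$ and keeping the stage-$n$ perturbation close to the identity outside a neighborhood of $K_n$ guarantees that the finite-stage automorphisms converge in $\Aut(\C^m)$ to a limit $F$ whose forward orbit of $B(a_0,1)$ tracks the sequence of translated balls $B(a_n,1)$.

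Finally I would verify that $B(a_0,1)$ is exactly one Fatou component of $F$. Inclusion in a Fatou component is immediate, because the iterates $F^n$ are uniformly close to rigid translations on $\overline{B(a_0,1)}$ and therefore form a normal family. Equality, i.e.\ the absence of points outside $B(a_0,1)$ in the same Fatou component, should be arranged by imposing transverse expansion along $\partial B(a_0,1)$ at each stage of the induction (again achievable through Andersén--Lempert perturbations), so that every point in an outer neighborhood of the ball separates from the ball's orbit after finitely many iterations. Oscillation of the resulting wandering domain is then automatic from the choice of $\{a_n\}$: the subsequence $F^{n_k}(B(a_0,1))$ escapes to infinity while $F^{m_k}(B(a_0,1))$ accumulates near $p_\infty$.

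The main obstacle, as in the escaping case, is the last verification: matching the inner translation-like dynamics required to preserve the ball with outer expansion needed to pin down the Fatou component, while simultaneously preserving the polynomial convexity of every intermediate compactum $K_n$. Since the paper already develops this machinery for Theorem~\ref{thmescaping}, the genuinely new ingredient is purely the choice of an oscillating center sequence whose closure (including the accumulation point $p_\infty$) remains polynomially convex; the rest of the argument is expected to go through \emph{mutatis mutandis}.
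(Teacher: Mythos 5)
Your plan has a fatal internal inconsistency, and it points at the real reason the paper's construction looks the way it does. You require the closed balls $\overline{\B}(a_n,1+\delta)$ to be pairwise disjoint \emph{and} some subsequence $a_{m_k}\to p_\infty$; but then for large $j,k$ one has $\|a_{m_j}-a_{m_k}\|<2(1+\delta)$, so two of these balls meet. No choice of centers fixes this, because the obstruction is geometric rather than combinatorial: if $\B(a_0,1)$ is a wandering component, its forward images are pairwise disjoint, and oscillation forces a subsequence of these images to remain in a fixed bounded set; infinitely many pairwise disjoint open sets with volume bounded below cannot lie in a bounded region. Hence along the returning subsequence the images \emph{must} shrink, and an $F$ that is $\epsilon_n$-close to rigid translations on unit-size balls can never produce an oscillating wandering unit ball. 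This is exactly why Proposition \ref{propmain} does not translate unit balls: only the initial ball has radius $1$ ($\beta_0=2$ as a container), the orbit is tracked through balls $\B(P_j,\beta_j)$ with $\beta_j\searrow 0$, and the returns toward a bounded region are produced by the $\lambda$-Lemma at a saddle fixed point at the origin (so $F^{n_k}\to 0$ on the ball), not by prescribing centers with a finite accumulation point.

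Two further steps are also not justified as stated. First, the paper does not pin the Fatou component to the ball by ``transverse expansion along $\partial\B(a_0,1)$''; it is unclear how such expansion would survive the infinite composition while the map stays close to a translation on a neighborhood of the closed ball. Instead, the paper plants a sequence of points $T_0^j$ accumulating densely on $b\B(P_0,1)$ and arranges (properties $(h)$--$(l)$ of Proposition \ref{propmain}) that each orbit $F^n(T_0^j)$ falls into the basin of an attracting fixed point $S_j$; if the component were strictly larger than the ball it would contain some $T_0^j$, contradicting normality because on the ball the subsequence $F^{n_k}$ tends to $0\neq S_j$. The same mechanism, not boundary expansion, is what makes Theorem \ref{thmescaping} work. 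Second, your claim that $K_n=\bigcup_{k\leq n}\overline{\B}(a_k,1)\cup\{p_\infty\}$ is polynomially convex is not automatic: polynomial convexity of unions of several disjoint balls of comparable size is a delicate matter, and the paper only ever uses unions of two disjoint closed balls together with Lemma \ref{lem:stability}, which applies to \emph{sufficiently small} balls added to an already polynomially convex compact set. So even after repairing the center sequence, the inductive compacta would have to be re-engineered along the lines of the paper's $K_k\cup\overline{\B}(P_{N_k},\beta_{N_k})$ bookkeeping.
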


As we will argue in the last section, the proofs of these two theorems can easily be modified so that the same statements hold if the Euclidean ball is replaced by any bounded simply connected regular open set $\Omega\subset \C^m$ whose closure is polynomially convex, in particular by any bounded convex domain. 
\medskip 

\subsection{Background and overview of results}
Let $F$ be a holomorphic automorphism of $\C^m$. The \emph{Fatou set} $\mathcal{F} $ is the largest open subset of $\C^m$ on which the family of iterates $(F^n)_{n\geq0}$ is locally equicontinuous. A connected component $\mathcal{F}_0$ of the Fatou set is called a Fatou component and we say that such component is \emph{wandering} if and only if $F^i(\mathcal{F}_0)\cap F^j(\mathcal{F}_0)=\emptyset$ for every pair of integers $i\neq j$. We will call a wandering Fatou component a \emph{wandering domain}. There are three types of wandering domains: 
\begin{enumerate}
\item \emph{escaping}; if all orbits converge to the line at infinity,
\item \emph{oscillating}; if there exists an unbounded orbit and an orbit with a bounded subsequence,
\item \emph{orbitally bounded}; if every orbit is bounded.
\end{enumerate}

The first construction of a holomorphic automorphism of $\C^2$ with a wandering domain is due to Forn\ae ss-Sibony \cite{FS} and their wandering domain is of the oscillating type. More recently Arosio-Benini-Forn\ae ss-Peters \cite{ABFP} constructed transcendental H\'enon maps, i.e. a holomorphic automorphism of $\C^2$ of the form $F(z,w) = (f(z) + aw, az)$ with 
$f:\C \rightarrow \C$ a transcendental function, that admit wandering domains.  In particular they construct examples of wandering domains which are of the escaping and of the oscillating type and they are biholomorphic to $\C^2$. The first example of a polynomial automorphism (of $\C^4$) with a wandering domain was given by Hahn-Peters \cite{HH} and their example was of the orbitally bounded type. In \cite{ABTP} we have shown that oscillating wandering domains of transcendental H\'enon maps can also have different complex structures. In particular we have constructed a wandering domain that supports a non-constant bounded plurisubharmonic function and therefore it can not be biholomorphic to $\C^2$. The most recent result is due to Berger-Biebler \cite{BB} who  have solved a long standing problem by proving existence of polynomial H\'enon maps which admit wandering domains and note that those can only be of the orbitally bounded type.  \medskip

In this paper we prove that every bounded convex domain in $\C^m$ can be realized as an escaping or oscillating wandering domain of an automorphism of of $\C^m$. Not only does this result give new examples of wandering domains but it also provides better insight about the geometry of the Fatou components. So far the only known examples of bounded Fatou components were the Siegel balls for polynomial H\'enon maps. We know that boundary of Siegel balls is not smooth and that such a domain must be biholomorphic to one of the following three: the  polydisc, the unit ball and a Thullen domain, but which of them can be realised as the Siegel ball for polynomial H\'enon maps it is presently unknown. 

Our results give the first examples of bounded Fatou components with a smooth boundary. Note that the only other known example of an automorphism that has a Fatou component with a smooth boundary is a shear automorphism of $\C^2$, which was constructed recently by Berteloot-Cheraghi \cite{BC}. Their automorphism has an invariant Fatou component which is equal to $\C\times \Delta$ where $\Delta$ denotes the Euclidean unit disc.

%These domains are known to be biholomorphic to the bounded Reinhardt domains in $\C^2$ which are precisely the  polydisc, the unit ball and a Thullen domain, but which of them can be realised as the Siegel ball for H\'enon maps it is presently unknown.

We believe that the construction behind Theorem 1 will open the way for the study of  intrinsic dynamics in wandering domains, as has recently been done for transcendental entire functions in \cite{BEGRS}.

Finally let us mention that tools of Anders\'en--Lempert theory, on which our construction rely, apply to the large class of Stein manifolds with the density property, see \cite[Section 4]{For}. Therefore we strongly believe that in many of those cases, the constructions presented in this paper could easily be modified  and used to produce  automorphisms with different types of wandering domains.
\medskip

Our paper is organized as follows:

 In Section 2 we introduce the notation and recall the basic ingredients that will be used in the paper. 
 
  In Section 3 we modify the constructions from \cite{ABFP,ABTP} and reprove the existence of wandering domains biholomorphic to $\C^m$ and to Short $\C^m$ using tools of Anders\'en--Lempert theory. This modification will serve as the basis for the construction of the oscillating wandering ball in Section 5.
  
   In Section 4 we use tools of Anders\'en--Lempert theory to inductively construct a sequence of automorphisms $(F_k)$, that converge uniformly on compacts, to an automorphism $F$ with the following properties: (1) The Euclidean diameter of $F^k(\B(P_0,1))$ is less than $2$ for all $k\geq 0$, (2) $F^k(\B(P_0,1))\rightarrow \infty$ as $k\rightarrow \infty$ and (3) there is a sequence of points $(T^j_0)$ that accumulate densely on the $b\B(P_0,1)$ and each of them is contained in the basin of some attracting fixed point. This implies that the Fatou component is exactly the ball $\B(P_0,1)$ which settles  Theorem 1.  
   
In Section 5 we prove Theorem 2 by carefully combining two constructions previously introduced in Section 3 and Section 4. Moreover we argue how these constructions can be generalized to a larger class of bounded domains.

\section{Preliminaries}
In this section, we introduce the notation and recall the basic ingredients that will be used in the paper. Throughout this paper we will always assume that $m\geq2$. 

The {\em polynomially-convex hull} of a compact set $K\subset \C^m$ is defined as
\[
	\widehat{K}= \{z\in \C^m : |p(z)| \le \sup_K |p| \ \ \text{for all holomorphic polynomials $p$} \}.
\] 
We say that $K$ is \emph{polynomially convex} if $\widehat{K}=K$. Note that holomorphic automorphisms of $\C^m$ preserve polynomial convexity, and therefore any image of the closed Euclidean ball under the automorphism is polynomially convex.

Given a point $z_0\in\C^m$, we denote by $\B(z_0,r)\subset \C^m$  the open $m$-dimensional Euclidean ball of radius $r$ centered at $z_0$. We will write $\B= \B(0,1)$.

We shall frequently use the fact that the union of any two disjoint closed Euclidean balls is polynomially convex, and also the following basic result; see e.g.\cite{Stout2007}.

\begin{lemma}\label{lem:stability}
Assume that $K \subset \C^n$ is a compact polynomially convex set.
For any finite set $p_1,\ldots,p_k\in\C^m\setminus K$ and for all sufficiently
small numbers $r_1>0,\ldots,r_k>0$, the set $\bigcup_{j=1}^k \overline{\B}(p_j,r_j) \cup K$
is polynomially convex. 
\end{lemma}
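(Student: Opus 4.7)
My plan is to proceed by induction on the number $k$ of balls. First I shrink the $r_j$ so that the closed balls $\overline{\B}(p_j,r_j)$ are pairwise disjoint and each disjoint from $K$; this is possible because $K$ is closed and every $p_j$ lies outside $K$. The induction then reduces the problem to the following one-ball claim: if $L \subset \C^m$ is compact and polynomially convex and $p \notin L$, then $L \cup \overline{\B}(p,r)$ is polynomially convex for every sufficiently small $r>0$. Applying this claim successively with $L_j = K \cup \bigcup_{i<j} \overline{\B}(p_i,r_i)$ and $p = p_j$ (each $L_j$ polynomially convex by the inductive hypothesis) handles the general case, provided the $r_j$ are chosen small enough at each stage.

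For the one-ball claim, polynomial convexity of $L$ together with $p \notin L = \widehat{L}$ gives, directly from the definition, a polynomial $q$ with $|q(p)| > \sup_L |q|$. After rescaling we may arrange $\sup_L |q| \leq 1-\delta$ and $|q(p)| \geq 1$ for some $\delta > 0$. By continuity of $q$, for all sufficiently small $r>0$ we have $|q| > 1 - \delta/2$ on $\overline{\B}(p,r)$ and also $\overline{\B}(p,r) \cap L = \emptyset$. Hence $q$ sends $L$ into the closed disk $\{|w| \leq 1-\delta\} \subset \C$ and sends $\overline{\B}(p,r)$ into a compact subset of $\{|w| \geq 1-\delta/2\}$; these two images are disjoint and have disjoint polynomially convex hulls in $\C$ (for a polynomial hull in $\C$ is just the union of the set with the bounded components of its complement, and no such component can bridge the gap $\{1-\delta < |w| < 1-\delta/2\}$).

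The result then follows from a Kallin-type separation lemma: the union of two disjoint compact polynomially convex subsets of $\C^m$ whose images under a fixed polynomial have disjoint polynomially convex hulls in $\C$ is itself polynomially convex. Here $L$ is polynomially convex by hypothesis and $\overline{\B}(p,r)$ is so because it is convex, and the polynomial $q$ just constructed provides the required separation. The main technical obstacle is a clean invocation of Kallin's lemma; alternatively one can give a self-contained Oka--Weil style argument, reducing via $q$ to a one-variable approximation on two well-separated compacta, which yields polynomial approximation of arbitrary functions holomorphic near $L \cup \overline{\B}(p,r)$ and hence polynomial convexity of the union. Either route ultimately appeals to a standard fact documented in Stout's book, which is the reference already pointed to in the paper.
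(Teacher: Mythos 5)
The paper never proves this lemma at all: it is quoted as a basic fact with a pointer to Stout's book, so your argument is by necessity a different route, namely a self-contained proof by induction on the number of balls, reduction to the one-ball case, and an appeal to Kallin's separation lemma (two disjoint polynomially convex compacts whose images under one polynomial have disjoint hulls in $\C$ have polynomially convex union). That scheme is correct and is essentially how the fact is proved in the literature; what it buys is an explicit explanation of why the radii must be small, at the cost of still importing Kallin's lemma from the same reference the paper cites. One step is stated imprecisely: from $q(L)\subset\{|w|\le 1-\delta\}$ and $q(\overline{\B}(p,r))\subset\{|w|\ge 1-\delta/2\}$ alone the hulls in $\C$ need not be disjoint, and your parenthetical reason is off --- a bounded component of the complement of $q(\overline{\B}(p,r))$ does not have to bridge the annulus $\{1-\delta<|w|<1-\delta/2\}$; if the image happened to surround the origin, that component would contain the whole disk $\{|w|<1-\delta/2\}$ and hence the hull of $q(L)$. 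The smallness of $r$ repairs this: by continuity, once $r$ is small enough $q(\overline{\B}(p,r))$ lies in a disk of radius $\delta/4$ about $q(p)$, and since the polynomial hull is contained in the convex hull it stays in that disk, which is disjoint from $\{|w|\le 1-\delta\}$ because $|q(p)|\ge 1$. With that correction, and with the harmless caveat that in your induction the admissible size of $r_j$ depends on the previously chosen radii (one can make the thresholds independent by separating $p_j$ from $K\cup\bigcup_{i<j}\overline{\B}(p_i,\rho_i)$ with the maximal radii $\rho_i$, but the successive choice already suffices for every use of the lemma in the paper), your proof is complete.
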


A domain $D\subset \C^m$ is called \emph{starshaped} (in some literature \emph{star-like}) if there exists a point $p\in D$ such that the line segment between $p$ and any other point $q\in D$ is contained in $D$. 
 We will say that a domain $D\subset\C^m$ is  \emph{starshapelike} if there exists $\Phi$ an automorphism of $\C^m$ and a  starshaped domain $D'$ so that $D=\Phi(D')$. For example, any image of the Euclidean ball under an automorphism of $\C^m$ is a starshapelike domain whose closure is polynomially convex.

The key ingredient in our proofs will be the following result of the Anders\'en--Lempert theory, which is a corollary of \cite[Theorem 4.12.1]{For}, see also \cite[Corollary 4.12.4]{For}. 

\begin{theorem}\label{al}
Let $A_1,A_2,\ldots, A_n$ be pairwise disjoint compact sets in $\C^m$ such that all but one are
starshapelike. Let $q_j\in Aut(\C^m)$ ($j=1,\ldots,n$) be such that the images $B_j=q_j(A_j)$ are
 pairwise disjoint. If the sets $K=\cup^n_{j=1}A_j$ and $K'=\cup_{j=1}^n B_j$ are polynomially
convex, then for every $\epsilon>0$ there exists $g\in Aut(\C^m)$ such that $\|g(z)-q_j(z)\|<\epsilon$ for all $z\in A_j$, $j=1,\ldots,m$. In particular
the automorphism  $g$  can be chosen so that its finite order jets agree with the corresponding jets of $q_j$ at any given finite set of points in $A_j$, for $1\leq j\leq m$.
\end{theorem}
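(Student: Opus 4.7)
The strategy is to reduce to the basic Anders\'en--Lempert approximation theorem: if $\Phi_t$ is a smooth isotopy of injective holomorphic maps defined on a neighborhood of a polynomially convex compact set $K$, with $\Phi_0=\mathrm{id}$ and $\Phi_t(K)$ polynomially convex for every $t\in[0,1]$, then $\Phi_1$ can be uniformly approximated on $K$ by automorphisms of $\C^m$. Accordingly, I plan to (i) glue the $q_j$ into a single biholomorphism $\Phi$ by setting $\Phi|_{U_j}=q_j$ on a small neighborhood $U_j$ of $A_j$ (the $U_j$ disjoint, which is possible since the $A_j$ are disjoint and the $B_j$ are disjoint), (ii) construct an isotopy from $\mathrm{id}$ to $\Phi$ on a neighborhood of $K$ whose images at every time are polynomially convex, and (iii) invoke the approximation theorem to produce $g$.

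After relabeling, assume $A_1$ is the (possibly) non-starshapelike set. For each $j\geq 2$ pick $\Psi_j\in\Aut(\C^m)$ and a starshaped $A_j'=\Psi_j^{-1}(A_j)$ with center $p_j'$, and write $p_j=\Psi_j(p_j')\in A_j$. In the first stage of the isotopy, I simultaneously contract each $A_j$ ($j\geq 2$) toward $p_j$ by $\Phi_t|_{A_j}=\Psi_j\circ R_{t,j}\circ\Psi_j^{-1}$ with $R_{t,j}(z)=p_j'+(1-t)(z-p_j')$, leaving $A_1$ fixed; since each shrunken $A_j$ ultimately fits inside an arbitrarily small ball about $p_j$, Lemma~\ref{lem:stability} keeps the union polynomially convex throughout. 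In a second stage, I isotope $\mathrm{id}$ to $q_1$ on a neighborhood of $A_1$ via a smooth path in $\Aut(\C^m)$, while translating the tiny balls around the $p_j$ along generic paths to tiny balls around chosen points $p_j''\in B_j$; for small enough radii the configuration remains polynomially convex by Lemma~\ref{lem:stability}. In a third stage, I run the reverse of stage one on the target side: each $B_j=q_j(A_j)$ is starshapelike as an automorphic image of $A_j$, so the symmetric construction expands each ball at $p_j''$ back out to $B_j$. Concatenating the three stages yields the desired isotopy from $\mathrm{id}$ to $\Phi$.

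The jet-interpolation addendum follows by postcomposing $g$ with a holomorphic automorphism of $\C^m$ of small norm on $K$ that corrects the finitely many prescribed jets at the marked points (such an automorphism can always be built as a finite composition of shears localized near the marked points). The main obstacle I expect is the polynomial-convexity bookkeeping along the isotopy: a general polynomially convex compactum cannot be contracted to a point through polynomially convex configurations, and it is precisely the starshape hypothesis on all but one $A_j$ that makes the first and third stages go through via Lemma~\ref{lem:stability}.
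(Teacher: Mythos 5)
The paper does not actually prove Theorem \ref{al}: it is quoted as a consequence of \cite[Theorem 4.12.1]{For} (see \cite[Corollary 4.12.4]{For}), and your three-stage isotopy (shrink the starshapelike sets, move small balls, expand on the target side, while isotoping $q_1$ to the identity on the exceptional set) is essentially the standard derivation of that corollary from the isotopy theorem, so your overall route is the right one. There is, however, a genuine gap at exactly the step you flag as the crux: Lemma \ref{lem:stability} does not ``keep the union polynomially convex throughout'' stages one and three. That lemma only applies once the shrunken sets already sit inside sufficiently small balls centered at points off the rest of the configuration; it says nothing about intermediate times, when $A_1\cup\Psi_2\bigl(R_{t,2}(\Psi_2^{-1}(A_2))\bigr)\cup\cdots$ is a union of $A_1$ with large proper subsets of the $A_j$. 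Polynomial convexity is not inherited by compact subsets, and starshapedness alone does not give it (the cone over the torus in $\C^2$ is starshaped but not polynomially convex), so this intermediate-time convexity is precisely where the hypotheses must be cashed in, and it needs an argument. The fix is standard but must be supplied: each $A_j$ is itself polynomially convex (a disjoint piece of a polynomially convex compact is polynomially convex, by an Oka--Weil separation argument), the partially contracted set $\Psi_j\bigl(R_{t,j}(\Psi_j^{-1}(A_j))\bigr)$ is the image of $A_j$ under the conjugate of an affine automorphism, hence polynomially convex, and then a Kallin-type separation argument (or the fact that a polynomial hull which splits into disjoint compact pieces is the union of the hulls of the corresponding pieces) shows that the union at each time, being contained in $K$ (resp.\ in $K'$ in stage three), remains polynomially convex. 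Note also that you must stop the contraction strictly before $t=1$ to keep the maps injective.

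Two further points need attention, though they are more easily repaired. In stage two you use, without justification, that $q_1$ can be joined to the identity inside $\Aut(\C^m)$ (true: compose with a translation and use $t\mapsto F(tz)/t$ down to the linear part), you need the radii of the moving balls to be small uniformly in $t$ and the paths of centers to avoid the moving compact $\phi_t(A_1)$ (compactness arguments, glossed over as ``generic paths''), and the junction between stages two and three is not continuous as described, since the end of stage two (a translated tiny ball) is not the time-reversed shrinking $q_j\circ\Psi_j\circ R_{s,j}\circ\Psi_j^{-1}$ with which stage three begins; the cleanest repair is to apply the approximation theorem twice --- once for the shrink-and-move isotopy, once for the expansion --- and compose the two resulting automorphisms, which suffices because the conclusion is only an approximation statement. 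Finally, the jet addendum cannot be obtained by ``shears localized near the marked points'' (shears are global and cannot be localized); this step is moot anyway, since \cite[Theorem 4.12.1]{For} already yields the approximating automorphism together with interpolation of finite-order jets at finitely many prescribed points, and you should simply carry that feature through your argument.
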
 

Let us remark that in the above theorem the compact set $A_j$ can also be a point, since by Lemma \ref{lem:stability} we can always find a small closed ball around $A_j$, such that its union with all the other sets is polynomially convex.  

\medskip

If $(H_{n})_{n\geq 1}$ is a sequence of automorphisms of $\C^m$, then for all $0\leq n\leq k$ we denote
$$H_{k,n}:=H_{k}\circ \dots \circ H_{n+1}.$$
Notice that with these notations we have for all $n\geq 0$, $$H_{n+1,n}=H_{n+1}, \quad H_{n,n}={\sf id}.$$
If for all $n\geq 1$ we have $H_{n}(\B)\subset \B $ then we define the {\sl basin} of the sequence $(H_n)$ as the domain
$$
\Omega_H:=\bigcup_{n\geq 0} H_{n,0}^{-1}(\B).
$$
The following lemma was established in \cite[Lemma 2]{ABTP} and will be used in Section 2 to determine the complex structure of a wandering domain.
\begin{lemma}\label{lem:compare}
To every finite family  $(F_1, \dots , F_k)$ of holomorphic automorphisms of $\C^m$  satisfying  $F_{j}(\B)\subset\subset \B $ for all $1\leq j\leq k$
we can associate $\e(F_1, \dots , F_k)>0$ such that the following holds:

Given any two sequences  $(H_{n})_{n\geq 1}$ and $(G_{n})_{n\geq 1}$ of holomorphic automorphisms of $\C^m$ satisfying $H_{n}(\B)\subset\subset \B $ and $G_{n}(\B)\subset \B $ for all $n\geq1$, and moreover satisfying
$$
\|H_{n}-G_{n}\|_{\overline\B}\leq \e(H_1, \dots , H_n),\quad \forall n\geq 1,
$$
the basins $\Omega_{G}$ and $\Omega_H$ are biholomorphically equivalent.
\end{lemma}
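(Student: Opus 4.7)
The plan is to construct the biholomorphism $\Phi:\Omega_H\to\Omega_G$ as the locally uniform limit on $\Omega_H$ of the automorphisms
$$
\Phi_n := G_{n,0}^{-1}\circ H_{n,0}\in\Aut(\C^m),
$$
and its inverse $\Psi$ as the symmetric limit of $\Psi_n := H_{n,0}^{-1}\circ G_{n,0}$; by construction $\Psi_n\circ\Phi_n={\sf id}$. The domain $\Omega_H$ is exhausted by the nested compact sets $K_N := H_{N,0}^{-1}(\overline{\B})$, the nesting $K_N\subset\subset K_{N+1}$ being guaranteed by $H_{N+1}(\overline{\B})\subset\subset\B$.

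The thresholds $\epsilon_n := \epsilon(H_1,\dots,H_n)$ will be defined inductively. Once $\epsilon_1,\dots,\epsilon_{n-1}$ and $H_1,\dots,H_n$ are fixed, admissible tuples $(G_1,\dots,G_{n-1})$ with $\|G_j-H_j\|_{\overline{\B}}\le\epsilon_j$ are, via the Cauchy estimates, uniformly $C^1$-close to $(H_1,\dots,H_{n-1})$ on any compact subset of $\B$; an inverse-function argument promotes this to uniform $C^1$-closeness of $G_j^{-1}$ to $H_j^{-1}$ on compact subsets of $\B$. Consequently, for any fixed compact $K\subset\B$, the composition $G_{n-1,0}^{-1}$ admits a uniform Lipschitz bound $L_n$ on $K$ depending only on $H_1,\dots,H_{n-1}$ and $\epsilon_1,\dots,\epsilon_{n-1}$. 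The telescoping identity
$$
\Phi_n(z)-\Phi_{n-1}(z) = G_{n-1,0}^{-1}\!\bigl(G_n^{-1}(H_n(H_{n-1,0}(z)))\bigr) - G_{n-1,0}^{-1}\!\bigl(H_{n-1,0}(z)\bigr),
$$
together with the fact that $H_{n-1,0}(z)\in H_{n-1}(\overline{\B})\subset\subset\B$ for $z\in K_{n-2}$ and the bound $\|G_n^{-1}(H_n(w))-w\|\le \mathrm{Lip}(G_n^{-1})\cdot\epsilon_n$, yields
$$
\|\Phi_n(z)-\Phi_{n-1}(z)\|\le L_{n}\cdot\mathrm{Lip}(G_n^{-1})\cdot\epsilon_n\qquad(z\in K_{n-2}),
$$
and I would choose $\epsilon_n$ small enough to make this quantity, together with the symmetric quantity for $\|\Psi_n-\Psi_{n-1}\|_{K_{n-2}'}$, at most $2^{-n}$.

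With this choice the series $\sum(\Phi_n-\Phi_{n-1})$ converges uniformly on each $K_N$ (for $n\ge N+2$), so $\Phi=\lim\Phi_n$ is a well-defined holomorphic map $\Omega_H\to\C^m$; similarly for $\Psi:\Omega_G\to\C^m$. The summability of the increments combined with the fact that $H_{N+1,0}(K_N)\subset H_{N+1}(\overline{\B})$ stays strictly inside $\B$ forces $\Phi(K_N)\subset G_{N+1,0}^{-1}(\B)\subset\Omega_G$, so $\Phi(\Omega_H)\subset\Omega_G$; symmetrically $\Psi(\Omega_G)\subset\Omega_H$. Pointwise one has $\Psi_n(\Phi_n(z))=z$ for all $z\in\Omega_H$; since for $z\in K_N$ the iterates $\Phi_n(z)$ lie eventually in a fixed compact subset of $\Omega_G$ on which $\Psi_n\to\Psi$ uniformly, passing to the limit yields $\Psi\circ\Phi={\sf id}_{\Omega_H}$, and the reverse composition is handled in the same way. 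Hence $\Phi$ is a biholomorphism between $\Omega_H$ and $\Omega_G$.

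The main technical obstacle is exactly the uniform control of the Lipschitz constants of the iterated inverses $G_{n-1,0}^{-1}$, which are a priori uncontrolled: $L^\infty(\overline{\B})$-approximation does not constrain $G_j$ outside $\overline{\B}$, while $G_{n-1,0}^{-1}$ can carry small neighborhoods of $\overline{\B}$ to very large sets. The key observation is that the estimates are only needed at points lying in $H_{n-1,0}(K_{n-2})\subset H_{n-1}(\overline{\B})\subset\subset\B$, a compact subset uniformly separated from $\partial\B$. On such a set the Cauchy estimates and an implicit function argument force $G_j^{-1}$ to be uniformly $C^1$-close to $H_j^{-1}$ when $\epsilon_j$ is small, and this controlled closeness propagates through the composition. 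This is precisely the mechanism that allows the threshold $\epsilon(H_1,\dots,H_n)$ to be chosen as a function of $H_1,\dots,H_n$ alone.
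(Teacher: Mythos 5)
The paper itself does not prove this lemma (it is quoted from \cite[Lemma 2]{ABTP}), so I judge your argument on its own terms. Your overall strategy --- realize the biholomorphism as the local uniform limit of $\Phi_n=G_{n,0}^{-1}\circ H_{n,0}$, with inverse the limit of $\Psi_n=H_{n,0}^{-1}\circ G_{n,0}$, and telescope --- is the right (and standard) one. The gap is in the central quantitative claim. From $\|G_j-H_j\|_{\overline\B}\le\epsilon_j$, Cauchy estimates do give $C^1$-closeness of $G_j$ to $H_j$ on compacts of $\B$, but the inverse-function step only controls $G_j^{-1}$ on compact subsets of the \emph{image} $H_j(\B)$, not ``on compact subsets of $\B$'' as you assert. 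For $y\in\B\setminus H_j(\overline\B)$ the point $H_j^{-1}(y)$ lies outside $\overline\B$, where the hypothesis says nothing about $G_j$: e.g.\ with $H_j(z,w)=(z/2,w/2)$ and $G_j=H_j\circ\sigma$, $\sigma(z,w)=(z,\,w+\epsilon'(z/1.2)^d)$, the sup-distance $\|G_j-H_j\|_{\overline\B}$ is of order $\epsilon'(1/1.2)^d$, arbitrarily small, while $G_j^{-1}(0.9,0)$ is displaced from $H_j^{-1}(0.9,0)$ by $\epsilon'(1.5)^d$, arbitrarily large. Hence your uniform Lipschitz constant $L_n$ for $G_{n-1,0}^{-1}$ ``on any fixed compact $K\subset\B$, depending only on $H_1,\dots,H_{n-1}$ and $\epsilon_1,\dots,\epsilon_{n-1}$'' does not exist; moreover your displayed bound involves $\mathrm{Lip}(G_n^{-1})$, which depends on the unknown $G_n$, so ``choose $\epsilon_n$ so that $L_n\cdot\mathrm{Lip}(G_n^{-1})\cdot\epsilon_n\le 2^{-n}$'' is circular: $\epsilon_n$ must be a function of $H_1,\dots,H_n$ alone.

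The failure is not only in the single factor $G_n^{-1}$: even the corrected ``image-compacts'' control cannot be applied to the whole composition $G_{n-1,0}^{-1}$ at the points you use. For $z\in K_{n-2}$ the point $w=H_{n-1,0}(z)$ runs over $H_{n-1}(\overline\B)$, so $G_{n-1}^{-1}(w)$ (once controlled) runs over essentially all of $\overline\B$, and there the next factor $G_{n-2}^{-1}$ is completely uncontrolled, its good region being only compacts of $H_{n-2}(\B)\subset\subset\B$. What repairs the proof is: (i) bound the displacement $\|G_n^{-1}(H_n(w))-w\|$ for $w$ in a compact of $\B$ directly by a Rouch\'e/degree argument plus injectivity of $G_n$ (if $\epsilon_n$ is below $\min_w\min_{\partial\B(w,\eta)}\|H_n-H_n(w)\|$, then $G_n$ attains the value $H_n(w)$ inside $\B(w,\eta)$), which yields an $H$-only threshold; (ii) work on $K_N$ only for $n\ge N+2$, tracking the sets $D_j:=H_j\circ\cdots\circ H_{N+1}(\overline\B)$, which satisfy $D_j\subset\subset H_j(\B)$ for $j\ge N+2$, so that each inverse factor $G_j^{-1}$ with $j\ge N+2$ is evaluated in its controlled region; and (iii) peel off the fixed finite head $G_{N+1,0}^{-1}$, which cannot be controlled in terms of the $H$'s but is a single fixed automorphism, so uniform convergence of the tail plus continuity of the head suffices. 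The same device is needed where you claim $\Phi(K_N)\subset G_{N+1,0}^{-1}(\B)$: the distance from $G_{N+1,0}^{-1}(H_{N+1}(\overline\B))$ to the boundary of $G_{N+1,0}^{-1}(\B)$ depends on the $G$'s, so it cannot be beaten by a prior choice of the $\epsilon$'s; instead one shows the limit of the tail lands in a fixed compact subset of $\B$ and then applies $G_{N+1,0}^{-1}$. With these corrections your scheme goes through; as written, the key uniformity claim is false and the choice of $\epsilon_n$ is circular.
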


In Section 2 and Section 5 we will use the following version of the classical $\lambda$-Lemma  (see \cite[Lemma 7.1]{PM}) for the construction of an oscillating orbit.

\begin{lemma}($\lambda$-Lemma) Let F be a holomorphic automorphism of $\C^m$ with a saddle  fixed  point  at  the  origin.  Denote  by $W^s(0)$ and $W^u(0)$ the stable  and  unstable  manifolds  respectively.  Let $p\in W^s(0)\backslash\{0\}$ and $q\in W^u(0)\backslash\{0\}$, 
and let $D^u(p)$ and $D^s(q)$ be holomorphic polydisks through $p$ and $q$, of dimension $dim W^u$ and $dim W^s$, respectively transverse to $W^s(0)$ and $W^u(0)$. 
 Let $\epsilon>0$.  Then there exists $N\in\N$ and $N_1>2N+1$,and a point $x\in D^u(p)$ with $F^{N_1}(x)\in D^s(q)$  
 such that $\|F^n(x)-F^n(p)\|<\epsilon$ and $\|F^{N_1-n}(x)-F^{-n}(p)\|<\epsilon$ for $0\leq n\leq N$, and $\|F^n(x)\|< \epsilon$ when $N<n<N_1-N$.
\end{lemma}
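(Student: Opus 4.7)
The plan is to follow the classical proof of the inclination lemma in the holomorphic setting. First, I would work in a small neighborhood $U$ of the saddle fixed point $0$ provided by the stable/unstable manifold theorem: $W^s_{\rm loc}(0)$ and $W^u_{\rm loc}(0)$ are holomorphic submanifolds of complementary dimensions intersecting transversely at $0$, and $DF(0)$ has a contracting block on $T_0W^s$ and an expanding block on $T_0W^u$. For the quantitative orbit estimates it suffices to use $C^1$ coordinates on $U$ in which $W^s$ and $W^u$ become linear subspaces and $F$ is uniformly hyperbolic.

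Next I would push $D^u(p)$ forward. Since $p\in W^s(0)\setminus\{0\}$ we have $F^n(p)\to 0$ along $W^s$, so $F^n(D^u(p))$ eventually enters $U$. Because $D^u(p)$ is a polydisc of dimension $\dim W^u$ transverse to $W^s$ at $p$, the standard graph-transform estimate shows that for large $n$ the intersection $F^n(D^u(p))\cap U$ can be written as a graph over a subdomain of $W^u$ whose $C^1$-norm tends to $0$; hence $F^n(D^u(p))\cap U$ accumulates in the $C^1$ topology on a disc contained in $W^u(0)$ through $0$. Applying the dual argument to $F^{-1}$, the iterates $F^{-k}(D^s(q))\cap U$ accumulate on a disc in $W^s(0)$ through $0$.

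Transverse intersection at $0$ between $W^u$ and $W^s$ now closes the argument. For $N$ and $k$ sufficiently large, the two iterated submanifolds $F^N(D^u(p))$ and $F^{-k}(D^s(q))$ inside $U$ are $C^1$-close to $W^u_{\rm loc}(0)$ and $W^s_{\rm loc}(0)$ respectively and therefore meet transversely at a point $y$ close to $0$. Setting $x:=F^{-N}(y)\in D^u(p)$ and $N_1:=N+k$ yields $F^{N_1}(x)\in D^s(q)$, and choosing $k$ comparable to $N$ produces $N_1>2N+1$. The orbit-tracking estimates then follow from hyperbolic shadowing: on $0\le n\le N$, the points $F^n(x)$ and $F^n(p)$ both lie on $F^n(D^u(p))$, whose diameter in the stable directions decays geometrically once inside $U$, while for small $n$ uniform continuity of the first finitely many iterates provides closeness; the symmetric argument applied to $F^{-1}$ and the backward orbit of $q$ handles the range $N_1-N\le n\le N_1$, and for $N<n<N_1-N$ we simply use that $F^n(x)\in U$, which we may shrink to have diameter less than $\epsilon$.

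The main technical obstacle is the simultaneous compatibility of the many constants: the radius of the linearizing neighborhood $U$, the rate at which $F^n(D^u(p))$ becomes $C^1$-close to $W^u(0)$, the times $N$ and $k$ needed for $F^n(p)$ and $F^{-n}(q)$ to enter $U$, and the final $\epsilon$ for the three orbit-proximity conditions, are all coupled. The standard bookkeeping in Palis--de Melo is to first fix $U$ small enough that the hyperbolic contraction/expansion constants control the graph transform, then to let $N$ be large enough both for the inclination estimate and for the stable/unstable shadowing to yield error less than $\epsilon$, and finally to read off $N_1$ from the time the transverse intersection is attained.
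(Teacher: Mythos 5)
The paper gives no proof of this lemma: it is stated as ``a version of the classical $\lambda$-Lemma'' with a citation to \cite[Lemma 7.1]{PM}, so there is no internal argument to compare against. Your sketch is precisely the standard inclination-lemma proof behind that reference, adapted to the quantitative orbit statement: graph transform pushing $F^n(D^u(p))$ $C^1$-close to $W^u_{\rm loc}(0)$, the dual statement for $F^{-1}$ applied to $D^s(q)$, a transverse intersection point $y$ near the origin with $x:=F^{-N}(y)$ and $N_1:=N+k$, and hyperbolicity plus continuity of finitely many iterates for the shadowing estimates; this outline is sound. Two minor points: the $F^{-n}(p)$ in the second estimate of the statement should be $F^{-n}(q)$, which you silently and correctly corrected; and in the range $0\le n\le N$ the clean justification is not the decay of the stable spread of $F^n(D^u(p))$ alone, but rather that the graph-transform component of $F^n(D^u(p))\cap U$ containing both $F^n(p)$ and $F^n(x)$ remains inside $U$, which you may shrink to have diameter less than $\epsilon$ (as you already do for the middle range), combined with the observation that $x=F^{-N}(y)$ tends to $p$ as $N\to\infty$, so the finitely many initial iterates outside $U$ are handled by continuity after $U$ and the entry times of the orbits of $p$ and $q$ are fixed.
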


\section{Oscillating wandering domains} 
The existence  holomorphic automorphisms of $\C^m$, that admit an oscillating wandering domain,  has been proven by Forn\ae ss and Sibony \cite{FS}. Recently constructed examples show that such wandering domains can be biholomorphic to $\C^m$ \cite{ABFP} and to a Short $\C^m$ \cite{ABTP}. In this section we slightly modify these two constructions and reprove them using tools of Anders\'en--Lempert theory. This modification will serve as the basis for the construction of the oscillating wandering ball, which will be presented in the last section of this paper.
The following proposition is a version of \cite[Proposition 4]{ABTP}.

\begin{proposition}\label{propgeneral} Let $(H_k)_{k\geq 1}$ be a sequence of holomorphic automorphisms of $\C^m$  satisfying $H_k(0)=0$ and  $H_{k}(\B)\subset\subset \B $ for all $k\geq 1$ and let $\epsilon_k=\epsilon(H_1,\ldots, H_k)$ be as in Lemma \ref{lem:compare}. 
There exists a sequence  $(F_k)_{k\geq 0}$ of holomorphic automorphisms of $\C^m$,
a sequence of points $(P_k)_{k\geq 0}$, sequences positive real numbers $(\beta_k)_{k\geq 0}\searrow 0$,    $(R_k)_{k\geq 0}\nearrow \infty$,  $(r_k)_{k\geq 0}\nearrow \infty$, strictly increasing sequences of integers  $(n_k)_{k\geq 0}$ and $(N_k)_{k\geq 0}$ satisfying  $n_0=0$ and $N_{k-1}\leq n_k\leq N_k$, and   such that the following properties are satisfied:

\begin{enumerate}[label=(\alph*)]
\item $\B(0,\frac{r_{k-1}}{2})\subset\subset F_k(\B(0,R_k))$ for all $k\geq 1$,
\item $\|F_{k}-F_{k-1}\|_{B(0,R_{k-1})}\leq 2^{-k}$ for all $k\geq 1$,
\item $F_{k}(P_n)=P_{n+1}$ for all $0\leq n< N_k$ and all $k\geq 1$,
\item $\|P_{n_k}\|\leq \frac{1}{k}$ for all $k\geq 1$,
\item $\|P_{N_{k}}\|> R_{k}$ for all $k\geq 1$,
%$B(P_n,\beta_n)\subset\subset B(0,R_k)$ for every $0\leq n<N_{k}$,
\item for all $k\geq 1$ we have $\beta_j<\frac{1}{k+1}$ for $N_k<j\leq  N_{k+1}$.
\item for all $0\leq s\leq k$,
$$F_{k}^j(\B(P_{n_s},\beta_{n_s}))\subset \subset\B(P_{n_s+j},\beta_{n_s+j}), \quad \forall\,1\leq j\leq N_{k}-n_s,$$
\item  for all $ 1\leq s\leq k$,
$$
\|\Phi_{n_{s}}^{-1}\circ F_k^{n_{s}-n_{s-1}}\circ\Phi_{n_{s-1}}-H_{s}\|_{\overline{\B}}<\epsilon_s,
$$
where   $\Phi_n(z):=P_n+\beta_n z$.
\end{enumerate}
\end{proposition}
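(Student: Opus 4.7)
The plan is to construct all data inductively in $k$, using Theorem \ref{al} at each step to splice $F_{k-1}$ (on a large ball about the origin) with a new block of dynamics that extends the orbit through an excursion far from the origin and back, while embedding a rescaled copy of $H_k$ between $P_{n_{k-1}}$ and $P_{n_k}$. The base case is essentially trivial, since conditions (a)--(h) start at $k\geq 1$ and the only instance of (g) for $k=0$ is vacuous: take $F_0=\operatorname{id}$, $P_0=0$, $n_0=N_0=0$, and any convenient $\beta_0, R_0, r_0$.

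For the inductive step, assume $F_{k-1}$ and all earlier data are in place. First I will choose $R_k>R_{k-1}$ so large that $F_{k-1}^{-1}(\overline{\B}(0,r_{k-1}/2))\subset \B(0,R_k)$, securing (a), and fix any $r_k>r_{k-1}$. Next I pick the new orbit points $P_{N_{k-1}+1},\ldots,P_{N_k}$ as a polygonal path in $\C^m$: first bringing the orbit into $\B(0,1/k)$ at an intermediate index $n_k>N_{k-1}$ (yielding (d)), then sending it to $\|P_{N_k}\|>R_k$ at $N_k>n_k$ (yielding (e)). I then select radii $\beta_j\in(0,1/(k+1))$ for $N_{k-1}<j\leq N_k$, handling (f), small enough that the closed balls $\overline{\B}(P_j,\beta_j)$ are pairwise disjoint and disjoint from $\overline{\B}(0,R_{k-1})$.

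The heart of the construction is the choice of affine automorphisms $q_j$ sending $P_j$ to $P_{j+1}$ for $N_{k-1}\leq j<N_k$. I take most of them to be pure translations but tune the jets of a few so that the composition $q_{n_k-1}\circ\cdots\circ q_{N_{k-1}}\circ F_{k-1}^{N_{k-1}-n_{k-1}}$ agrees, to sufficiently high order at $P_{n_{k-1}}$, with $\Phi_{n_k}\circ H_k\circ\Phi_{n_{k-1}}^{-1}$, where $\Phi_n(z)=P_n+\beta_n z$. I then apply Theorem \ref{al} to the pairwise disjoint compact sets $A_0=\overline{\B}(0,R_{k-1})$ with assigned map $F_{k-1}$, and $A_j=\overline{\B}(P_{N_{k-1}+j-1},\rho_j)$ with assigned map $q_j$ for $j=1,\ldots,N_k-N_{k-1}$, where $\rho_j$ is small enough that both $\bigcup_j A_j$ and the union of images are polynomially convex (by Lemma \ref{lem:stability}); all $A_j$ are starshapelike as balls. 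The theorem produces $F_k\in\Aut(\C^m)$ that approximates $F_{k-1}$ on $A_0$ within $2^{-k}$ (giving (b)) and matches the jets of $q_j$ at each $P_j$ (giving (c)).

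The hard part will be verifying (g) and (h), which are estimates on iterated compositions over entire blocks rather than on single steps. My strategy is to fix $\beta_{n_{k-1}}$ very small before committing to the jet order and the Anders\'en--Lempert threshold $\epsilon$, so that a high-order Taylor expansion of $F_k$ around $P_{n_{k-1}}$ controls $F_k$ uniformly on $\overline{\B}(P_{n_{k-1}},\beta_{n_{k-1}})$; iterating this $n_k-n_{k-1}$ times and rescaling via $\Phi_{n_k}^{-1}$ delivers the $\epsilon_k$-approximation of $H_k$ on $\overline{\B}$ needed for the new $s=k$ instance of (h). To preserve the $s<k$ instances, I take $\epsilon$ small enough that the $n_s-n_{s-1}$-fold composition of $F_k$ on $\Phi_{n_{s-1}}(\overline{\B})$ differs from that of $F_{k-1}$ by less than $\epsilon_s$, which is feasible by Lipschitz estimates on the compact set $A_0$ on which $F_k$ is already uniformly close to $F_{k-1}$. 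Condition (g) reduces to the same type of Taylor estimate applied step-by-step along the orbit.
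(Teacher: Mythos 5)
There is a genuine gap at the heart of your inductive step: you cannot simultaneously keep $F_k$ within $2^{-k}$ of $F_{k-1}$ on $\overline{\B}(0,R_{k-1})$ and prescribe, via Theorem \ref{al}, new affine maps $q_j$ on small balls around orbit points that you have placed freely inside $\B(0,1/k)$. Condition (d) forces the new orbit to pass through $\B(0,1/k)$, which for all large $k$ lies deep inside $\overline{\B}(0,R_{k-1})$, so the balls $A_j$ around those points can be neither disjoint from $A_0=\overline{\B}(0,R_{k-1})$ (as Theorem \ref{al} requires) nor carry dynamics different from $F_{k-1}$ there. Once the orbit is inside the region where all later maps must stay close to $F_{k-1}$, its dive toward the origin and its escape back out past $R_k$ must be produced by the dynamics of $F_{k-1}$ itself, not prescribed by hand; with your base case $F_0=\operatorname{id}$ there is no mechanism for this. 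This is exactly the idea your proposal is missing and the paper supplies: $F_0$ is a linear saddle, every $F_k$ is made to fix $0$ with the same differential, and the $\lambda$-Lemma produces the new oscillation $(Q_j)$ as a genuine finite $F_k$-orbit going inward along the stable manifold into $\B(0,\tfrac{1}{k+1})$ and outward along the unstable manifold past $R_{k+1}$; Theorem \ref{al} is then applied with $q_1=F_k$ on a neighborhood of that orbit (no conflict with closeness to $F_k$), and genuinely new behavior is prescribed only on the set $\mathcal{W}\subset\B(P_{N_k},\beta_{N_k})$, which is disjoint from $K_k$.

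Your treatment of (h) also fails as stated: a chain of affine maps, conjugated by the affine scalings $\Phi_n$, is again affine in $z$, so it cannot be $\epsilon_k$-close on $\overline{\B}$ to a genuinely nonlinear $H_k$, and finite-order jet agreement at the center does not repair this because rescaling does not linearize $H_k$. The paper instead encodes $H_{k+1}$ exactly by prescribing on $\mathcal{W}$ the automorphism $\phi=F_k^{-\ell+1}\circ\Phi_{n_{k+1}}\circ H_{k+1}\circ\Phi_{n_k}^{-1}\circ F_k^{-(N_k-n_k)}$, so that the new instance of (h) holds by construction up to the Anders\'en--Lempert error. Finally, (a) is not secured by your choice of $R_k$ alone: in the paper $r_{k+1}$ is chosen only after $g_{k+1}$ so that $g_{k+1}(\B(0,R_{k+1}))\subset\subset\B(0,r_{k+1})$, and a second application of Theorem \ref{al} (post-composition with a map $h$ close to the identity on $g_{k+1}(\overline{\B}(0,R_{k+1}))$, pushing the image of $\overline{\B}(Q_M,\theta)$ beyond $\overline{\B}(0,r_{k+1})$) is needed both for (a) and to maintain the inductive polynomial-convexity hypotheses on $K_{k+1}$ and $\overline{\B}(P_{N_{k+1}},\beta_{N_{k+1}})$, bookkeeping which your proposal does not address.
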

\begin{remark} In the  above proposition, the properties $(a)$ and $(b)$ imply that the sequence $F_k$ converges uniformly on compacts to an automorphism $F$. The properties $(c)$--$(g)$ ensure the existence of an oscillating  wandering domain for $F$ and the property $(h)$ determines the intrinsic dynamics and the geometry of such a domain.  
\end{remark}

%\subsection{Proof of Proposition \ref{propgeneral}}
\begin{proof}
We prove this proposition by induction on $k$.   
\medskip

{\bf Base case:} We start the induction by letting 
$$F_0(z_1,\ldots, z_m)=(\frac{1}{2}z_1,\ldots,\frac{1}{2}z_\iota,2z_{\iota+1},\ldots,2z_m)$$ for some $1\leq \iota<m$. Let  $R_0=1$ and let $r_0>1$ be so large that $F_0(\B(0,R_0))\subset\subset \B(0,r_0)$ and set $K_0:=F^{-1}_0(\overline{\B}(0,r_0))$. Moreover we let $n_0=N_0=0$, $\beta_0=1$, and choose any $P_0\in  \mathbb{C}^m\backslash K_0$ such that $K_0$ and $\overline{\B}(P_{0},\beta_{0})$ are disjoint and their union is polynomially convex.
Notice that all conditions are trivially satisfied for $k =0$. 
\medskip

{\bf Induction hypothesis:} Let us suppose that conditions $(a)$---$(h)$ hold for certain $k$ and that for $K_k:=F^{-1}_k(\overline{\B}(0,r_{k}))$ we have:
\begin{itemize}
\item  $\overline{\B}(0,R_{k})\subset K_k$
\item $K_k\cap \overline{\B}(P_{N_k},\beta_{N_k})=\emptyset$ 
\item $K_k\cup \overline{\B}(P_{N_k},\beta_{N_k})$ is polynomially convex.
\end{itemize}
 
\medskip 
{\bf Inductive step:} We proceed with the constructions satisfying the conditions for $k+1$. 
First choose  $R_{k+1}>\|P_{N_k}\|+1$ such that $ K_k\subset\B(0,R_{k+1})$. 
By the $\lambda$-Lemma there exists a finite $F_k$-orbit $(Q_j)_{0\leq j \leq M}$, the new oscillation, which goes inward along  the  stable  manifold  of $F_k$,  reaching  the  ball $\ B(0,\frac{1}{k+1})$,  and  then  outwards  along  the  unstable manifold of $F_k$. That is there exists a finite set of points satisfying  i.e. $F_k^j(Q_0)=Q_j$ for all $0\leq j\leq M$
and:
\begin{enumerate}[label=(\roman*)]
\item[($\mathcal{A}1$)] $\|Q_{j}\|<R_{k+1}$ for all $0\leq j<M$,
\item[($\mathcal{A}2$)] $\|Q_{M}\|>R_{k+1}$,
\item[($\mathcal{A}3$)] $\|Q_{\ell}\|<\frac{1}{k+1}$ for some $0<\ell<M$.
\end{enumerate}

Using approximation Theorem \ref{al} we will construct a  map $F_ {k+1}$ that connects  the  previously  constructed  finite  orbit $(P_j)_{0\leq j\leq N_k}$ with the new oscillation $(Q_j)_{0\leq j\leq M}$.
 %\medskip

Next we choose small enough $0<\theta<\frac{1}{k+1}$ so that: 
\begin{enumerate}[label=(\roman*)]
\item[($\mathcal{B}1$)] the ball $\overline{\B}(Q_M, \theta)$ is disjoint from $\overline{\B}(0, R_{k+1})$,

\item[($\mathcal{B}2$)] the balls
\begin{equation*}\overline{\B}(P_{N_k},\beta_{N_k}),\qquad \overline{\B}(Q_0, \theta) \qquad \overline{\B}(Q_M, \theta)
\end{equation*}
are pairwise disjoint, and disjoint from the set
\begin{equation*}
L:=K_k\cup \bigcup_{0< i<M} \overline{\B}(Q_i,\theta),
\end{equation*}
and their union with $L$ is a polynomially convex set (see Lemma \ref{lem:stability}),
\item[($\mathcal{B}3$)]  $\B(P_{N_k},\beta_{N_k})\cup{\B}(Q_0, \theta)\cup L\subset\subset \B(0, R_{k+1})$.
\end{enumerate}

Observe that by continuity of $F_k$ there exists
$0<s_\ell<\theta$ small enough such that 
 $$F_k^j(\B(Q_\ell,s_\ell))\subset \subset \B(Q_{\ell+j},\theta)\qquad \text{for all } 0\leq j\leq M-\ell,$$ and such that
\begin{equation*}
F_k^{-j}(\B(Q_\ell,s_\ell))\subset\subset \B(Q_{\ell-j}, \theta)\quad \text{for all } 0\leq j\leq \ell.
\end{equation*}

\medskip

We are now ready to construct the map $F_{k+1}$. Set $n_{k+1}:=N_k+\ell$,  $P_{n_{k+1}}:=Q_\ell$ and  $\beta_{n_{k+1}}:=s_\ell$. 

 First we define automorphisms  
 %\Psi_{\ell}:=Q_{\ell}+s_{\ell}\cdot z
%\quad\text{and}\quad
\begin{equation}\label{phi}
\phi:= F_k^{-\ell+1}\circ\Phi_{n_{k+1}}  \circ H_{k+1}\circ \Phi_{n_{k} }^{-1}\circ F_k^{-(N_k-n_{k})}
\end{equation}
and a compact starshapelike domain 
$$
\mathcal{W}:=F_k^{N_k-n_{k}}(\overline \B(P_{n_{k}},\beta_{n_{k}}))\subset \B(P_{N_k},\beta_{N_k}).
$$
Observe that the following properties hold:
\begin{enumerate}[label=(\roman*)]
\item[($\mathcal{C}1$)] $\phi(P_{N_k})=Q_1$,
\item[($\mathcal{C}2$)] $\phi(\mathcal{W})\subset\subset F_k( \B(Q_{0}, \theta))$,
\item[($\mathcal{C}3$)] $F_k^j(\phi(\mathcal{W}))\subset\subset \B(Q_{j+1}, \theta)$ for all $0\leq j<\ell-1$,
\item[($\mathcal{C}4$)] $F_k^{\ell-1}(\phi(\mathcal{W}))\subset\subset \B(Q_{\ell}, s_{\ell})$.
\end{enumerate}
In the terminology of Theorem \ref{al} let 
\begin{equation*}
A_1=L\cup  \overline{\B}(Q_M, \theta),\qquad A_2= \mathcal{W}.
\end{equation*} 
It follows from the condition $(\mathcal{B}2)$ and $\mathcal{W}\subset \B(P_{N_k},\beta_{N_k})$ that these two sets are pairwise disjoint and that their union is polynomially convex. 
Next we set $q_1(z)=F_k(z)$ on $A_1$ and $q_2(z)=\phi(z)$ on $A_2$.  We claim that their images 
$B_1=q_1(A_1)$ and $B_2=q_2(A_2)$ are pairwise disjoint and that their union is also polynomially convex. 
First recall that by $(\mathcal{C}2)$ we have  $B_2=\phi(\mathcal{W})\subset\subset F_k( \B(Q_{0}, \theta))$ and  from the condition $(\mathcal{B}2)$ we know that $A_1$ and $\B(Q_{0}, \theta)$ are disjoint and their union is polynomially convex. Since $F_k$ is an automorphism we can conclude that  $B_1=F_k(A_1)$ and  $F_k( \B(Q_{0}, \theta))$ are also disjoint and their union  is polynomially convex. Finally since $\phi$ is an automorphism and $B_2=\phi(\mathcal{W})\subset\subset F_k( \B(Q_{0}, \theta))$ is a starshapelike it follows that $B_1$ and $B_2$ are pairwise disjoint and that their union is also polynomially convex. 
\medskip

 By Theorem \ref{al} there exists an automorphism $g_{k+1}$ such that

\begin{enumerate}[label=(\Roman*)]
\item $\|g_{k+1}-F_k\|\leq \delta_k$ on the set $L\cup\overline\B(Q_M,\theta)$,
\item $\|g_{k+1}-\phi\|\leq \delta_k,$ on $\mathcal{W}$
\item $g_{k+1}(0)=0$ and $d_0g_{k+1}=d_0F_k$
\item $g_{k+1}(P_j)=F_k(P_j)$ for all $0\leq j<P_{N_k}$,
\item $g_{k+1}(P_{N_k})=Q_1$ and $g_{k+1}(Q_j)=F_k(Q_j)$ for all $0<j\leq M$,
  
 \end{enumerate} 
where we have chosen  $\delta_k\leq \frac{1}{2^k}$ small enough such that:
\begin{enumerate}[label=(\roman*)]
\item $g_{k+1}^j(\mathcal{W})\subset  \B(Q_{j}, \theta),$ for all    $0< j< \ell,$
\item $g_{k+1}^\ell(\mathcal{W})\subset  \B(Q_{\ell}, s_\ell),$ 
\item $g_{k+1}^j(\B(Q_{\ell}, s_\ell))\subset\subset  \B(Q_{\ell+j},\theta),$ for all $0\leq j\leq M-\ell,$
\item for all $0\leq s\leq k$, 
$$g_{k+1}^j(\B(P_{n_s},\beta_{n_s}))\subset \subset\B(P_{n_s+j},\beta_{n_s+j}), \quad \forall 1\leq j\leq N_{k}-n_s.$$
\item  $ \|\Phi_{n_{s+1}}^{-1}\circ g_{k+1}^{n_{s+1}-n_{s}}\circ\Phi_{n_{s}}-H_{s+1}\|_{\overline{\B}}<\varepsilon_{s}$ for all $0\leq s\leq k$
\end{enumerate} 
\medskip 

\begin{figure}[t]
\centering
\label{fig:Detour}
\includegraphics[width=4in]{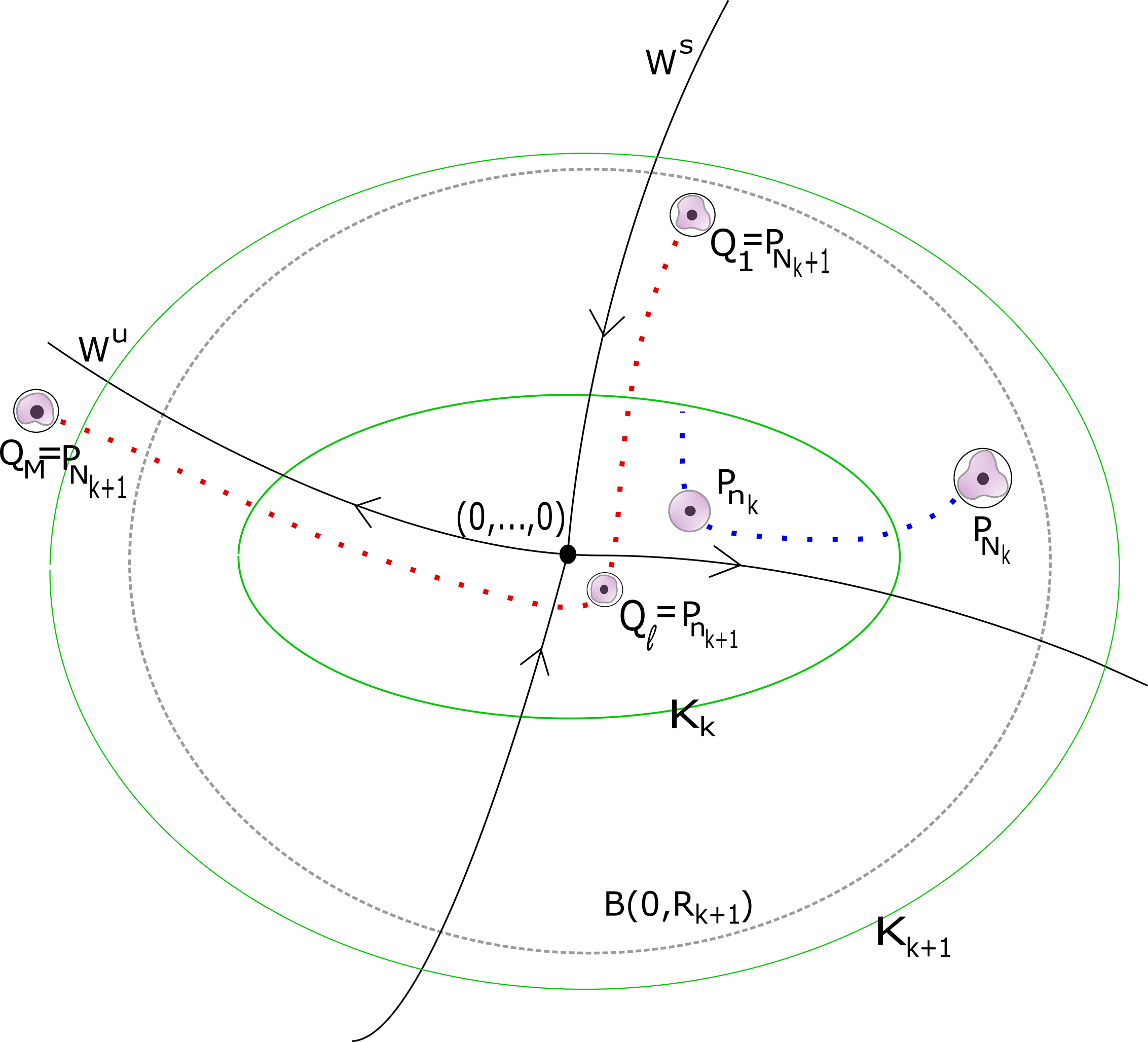}
\caption{A sketch of the piece of orbit constructed at step $k+1$. In red, the points $Q_n$  constructed using the Lambda Lemma. The circles around points represent balls $\B(P_{n_k},\beta_{n_k})$, $\B(P_{N_k},\beta_{N_k})$, $\B(Q_\ell,s_\ell)$, $\B(Q_1,\theta)$ and $\B(Q_M,\theta)$. The colored domains inside balls are $F_k$-images of the set $\mathcal{W}$, see properties $(\mathcal{C}1)-(\mathcal{C}4$) }
\end{figure}

\begin{remark} Note that the map $g_{k+1}$ already satisfies all the properties $(b)$---$(h)$ of the Proposition \ref{propgeneral}. In order  to make sure that also property $(a)$ is satisfied, we need to post-compose our map $g_{k+1}$ with an appropriate automorphism. Note that this property is needed to ensure that the limit map of the sequence $(F_k)$ is surjective.
\end{remark}
\medskip

Let us continue with induction by choosing $r_{k+1} >r_k+1$  such that 
$$g_{k+1} (\B(0, R_{k+1})) \subset\subset \B(0, r_{k+1}).$$
 Since compact sets  $\overline{\B}(Q_{M},\theta)$ and $\overline{\B}(0, R_{k+1})$  are disjoint starshapelike domains whose union is polynomially convex  the same holds for  
 $$
 U:=g_{k+1}(\overline{\B}(Q_{M},\theta)), \qquad V:=g_{k+1}(\overline{\B}(0, R_{k+1})).
 $$
  Choose any point $Q'\in \C^m$  such that the ball $\overline{\B}(Q',\theta)$ lies in the complement of  $\overline{\B}(0, r_{k+1})$ and let $\psi$ be a linear map satisfying $(\psi\circ g_{k+1})(Q_M)=Q'$ and $ \psi(U)\subset {\B}(Q',\theta).$
 
In the terminology of Theorem \ref{al} we set $A_1=U$, $A_2=V$ and we have already seen that these two sets are disjoint and their union is polynomially convex. Set $q_1(z)=\psi(z)$ and $q_2(z)=z$ and let us show that also $B_1=q_1(A_1)=\psi(U)$ and $B_2=q_2(V)=V$ are disjoint and their union is polynomially convex. The set  $B_1=\psi(U)$ is a starshapelike and is a subset of a ball $\overline{\B}(Q',\theta)$ which is disjoint from the ball $\overline{\B}(0, r_{k+1})$. Since $B_2$ is a starshapelike domain and it is contained in $\overline{\B}(0, r_{k+1})$ it follows that $B_1$ and $B_2$ are disjoint and their union is polynomially convex.
By Theorem \ref{al} there exists  an automorphism $h$ such that

\begin{enumerate}[label=(\Roman*)]
\item $\|h-{\rm id}\|_V\leq \delta'_k$,
\item $\|h-\psi\|_U\leq \delta'_k$,
\item  $h(Q_j)=Q_j$ for all $0< j\leq M$,
\item $(h\circ g_{k+1})(Q_M)=Q'$
\item $h(0)=0$, $d_0h={\sf id}$, $h(P_j)=P_j$ for all $1\leq j\leq P_{N_k}$,

 \end{enumerate} 
where we have chosen  $\delta_k'\leq \frac{1}{2^k}$ small enough such that
\begin{enumerate}[label=(\roman*)]
\item $(h\circ g_{k+1})^j(\mathcal{W})\subset  \B(Q_{j}, \theta),$ for all  $0< j< \ell,$
\item $(h\circ g_{k+1})^\ell(\mathcal{W})\subset  \B(Q_{\ell}, s_\ell),$ 
\item $(h\circ g_{k+1})^j(\B(Q_{\ell}, s_\ell))\subset\subset  \B(Q_{\ell+j},\theta),$ for all $0\leq j\leq M-\ell,$
\item for all  $0\leq s\leq k$ 
$$(h\circ g_{k+1})^j(\B(P_{n_s},\beta_{n_s}))\subset \subset\B(P_{n_s+j},\beta_{n_s+j}),\quad \forall 1\leq j\leq N_{k}-n_s.$$ 

\item  $ \|\Phi_{n_{s+1}}^{-1}\circ (h\circ g_{k+1})^{n_{s+1}-n_{s}}\circ\Phi_{n_{s}}-H_{s+1}\|_{\overline{\B}}<\varepsilon_{s}$ for all $0\leq s\leq k$
\item $(h\circ g_{k+1})(\overline{\B}(Q_{M},\theta))\subset{\B}(Q',\theta)$.
\end{enumerate}

\medskip

We define $F_{k+1}:=h\circ g_{k+1}$, so that the sequences of points
$$
(P_j)_{0\leq j\leq N_k},\qquad (Q_j)_{0< j\leq M}
$$
together form the start of an $F_{k+1}$-orbit, t.i. $F_{k+1}^j(P_0)=P_j$ for $j\leq N_k+M$ where $P_{N_k+j}=Q_j$ for $ 0< j \leq M$.

Set $N_{k+1}:=N_k+M$ and define  $\beta_j:=\theta$ for $N_k< j< n_{k+1}$ and for $n_{k+1}<j\leq N_{k+1}$. 

Finally we define $K_{k+1}:=F^{-1}_{k+1}(\B(0,r_{k+1}))$ and observe that  $\B(0,R_{k+1})\subset K_{k+1}$. Since  $$F_{k+1}(\B(P_{N_{k+1}},\beta_{N_{k+1}}))\subset\subset \B(Q',\theta),$$ where  $P_{N_{k+1}}=Q_M$ and since the set
$\B(0,r_{k+1}) \cup  \B(Q',\theta)$ 
is polynomially convex it follows that:
\begin{itemize}
\item  $\overline{\B}(0,R_{k+1})\subset K_{k+1}$
\item $K_{k+1}\cap \overline{\B}(P_{N_{k+1}},\beta_{N_{k+1}})=\emptyset$ 
\item $K_{k+1}\cup \overline{\B}(P_{N_{k+1}},\beta_{N_{k+1}})$ is polynomially convex.
\end{itemize}
 
\medskip

It is immediate that conditions $(c)$---$(h)$ are satisfied for the $(k+1)$-th step. Condition $(b)$ follows from the fact that $\B(0,R_k)\subset K_k\subset L\subset \B(0,R_{k+1})$ and that $h$ is almost an identity on $g_{k+1}(\B(0,R_{k+1}))$, hence $F_{k+1}=h\circ g_{k+1}$ approximates $F_k$ on $\B(0,R_k)$ as close as we want. For condition $(a)$
it is enough to prove that  $\B(0,\frac{r_k}{2})\subset F_{k+1}(K_k)$ where 
$K_k=F_k^{-1}(\B(0,r_k))\subset\B(0,R_{k+1})$. Since $g_{k+1}$ is almost $F_k$ on $K_k$ it follows that  
$\B(0,\frac{2r_k}{3})\subset\subset g_{k+1}(F_k^{-1}(\B(0,r_k))$ and since $h$ is almost an identity on $g_{k+1}(K_n)$  it follows that $\B(0,\frac{r_k}{2})\subset(h\circ g_{k+1})(\B(0,R_{k+1})) $. This concludes the inductive step.
\end{proof}

 \begin{remark}\label{rem1} Notice that the map $F_k$ depends on maps $H_{1},\ldots, H_k$, but not on $H_{k+1}$. This means that although the Proposition \ref{propgeneral} starts with a given sequence $(H_k)_{k\geq1}$, we actually have a total freedom of choosing the map $H_{k+1}$ in the $k+1$-th of the induction. In particular we can assume that $H_{k+1}$ satisfies certain conditions coming from the map $F_k$.  
 For example note that in the $k+1$-th step of the induction we chose the radius $\beta_{n_{k+1}}:=s_\ell$,  before  we make an $\epsilon_{k+1}$-approximation to $H_{k+1}$, that is before  we introduce the map $\phi$  in \eqref{phi}  and construct the map  $g_{k+1}$, hence we can assume that the map $H_{k+1}$ satisfies $\|H_{k+1}(z)\|\leq \beta_{n_{k+1}}^{k+1}\|z\|$. In particular this means that instead of starting with a sequence $(H_k)_k$ we can choose its elements inductively so that they satisfy $\|H_{k}(z)\|\leq \beta_{n_{k}}^{k}\|z\|$ for all $k$. 
 \end{remark}
 
\medskip
Now we show that Proposition \ref{propgeneral} implies the existence of an oscillating wandering Fatou component. The arguments in the following two paragraphs are similar to those in the proof of \cite[Theorem 1]{ABTP} but we choose to present them here, so that the paper remains self-contained. 
 \medskip
 
 Let $(F_k)$ be a sequence of automorphisms of $\mathbb{C}^m$ satisfying conditions $(a)-(h)$ of  Proposition \ref{propgeneral}. The sequence $(F_k)$ converges uniformly on compact subsets to  an automorphism $F$ of $\C^m$ with an isolated fixed point at the origin and  with $d_0F$ being a diagonal matrix with eigenvalues equal to $\frac{1}{2}$ and $2$. There is an unbounded orbit $(P_n)$, a  sequence
 $\beta_n\to 0$ and  a strictly increasing sequences of integers  $(n_k)$ and $(N_k)$ such that the following properties are satisfied:
\begin{enumerate}[label=(\roman*)]
\item $P_{n_k}\to 0$ and $P_{N_k}\to \infty$ as $k\rightarrow \infty$,
\item $F^j(P_0)=P_j$ for all $j\geq0$,
\item for all $k\geq 0$,
\begin{equation}\label{palle}
F^j(\mathbb{B}(P_{n_k},\beta_{n_k}))\subset \mathbb{B}(P_{n_k+j},\beta_{n_k+j}),\quad \forall\, j\geq 0.
\end{equation}
\item  if for all  $k\geq 1$ we denote
  $$G_{k}:=\Phi_{n_{k}}^{-1}\circ F^{n_{k}-n_{k-1}}\circ\Phi_{n_{k-1}}\in {\rm Aut}(\C^m),$$
 then by combining conditions (g) and (h) from Proposition \ref{propgeneral} it follows that $G_{k}(\B)\subset \B$ for all $k$, and
 \begin{equation}\label{saruman}
 \|G_{k}-H_{k}\|_{\overline{\B}}\leq\varepsilon(H_1,\dots, H_k),\quad \forall\,k\geq 1.
 \end{equation}
 
\end{enumerate}
\medskip 
By  (\ref{palle}) the Euclidean diameter of $F^j(\mathbb{B}(P_{n_k},\beta_{n_k}))$ is bounded for all  $j\geq 0$, hence each ball $\B(P_{n_k},\beta_{n_k})$ is contained in some Fatou component which we denote by $\mathcal{F}_{n_k}$. Let us show that the above properties imply the existence of a wandering Fatou component.

\begin{lemma}If $i\neq j$ then  $\mathcal{F}_{n_i}\neq \mathcal{F}_{n_j}$, and hence $F$ has an oscillating wandering Fatou component. 
\end{lemma}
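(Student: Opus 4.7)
The strategy is proof by contradiction: assume $\mathcal{F}_{n_i}=\mathcal{F}_{n_j}$ for some $i<j$ and set $d:=n_j-n_i>0$. Since $F$ is an automorphism sending Fatou components bijectively and $F^k(\mathcal{F}_m)=\mathcal{F}_{m+k}$, this equality propagates to $\mathcal{F}_m=\mathcal{F}_{m+d}$ for every $m\geq 0$, so the common Fatou component $\mathcal{F}:=\mathcal{F}_{n_i}$ is $F^d$-invariant. Pick a compact continuous path $\gamma\subset\mathcal{F}$ joining $P_{n_i}$ to $P_{n_j}$. Local equicontinuity of $(F^n)_{n\geq 0}$ on $\mathcal{F}$ yields a uniform bound $C:=\sup_{n\geq 0}\mathrm{diam}(F^n(\gamma))<\infty$. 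Evaluating at the two endpoints and reparametrising by $m=n_i+n$ gives the slow variation estimate
\begin{equation*}
\|P_m-P_{m+d}\|\leq C\qquad\text{for all } m\geq n_i,
\end{equation*}
whence $\bigl|\,\|P_{m+kd}\|-\|P_m\|\,\bigr|\leq kC$ for all $k\geq 0$ and $m\geq n_i$.

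Next I would contradict this estimate using the opposing growth conditions (d) and (e) of Proposition \ref{propgeneral}, namely $\|P_{n_k}\|\to 0$ while $\|P_{N_k}\|>R_k\to\infty$. Since the estimate only couples orbit norms within a single residue class modulo $d$, a pigeonhole argument on the $d$ classes produces infinitely many indices $n_k$ and $N_{k'}$ in a common class $r\in\{0,\dots,d-1\}$. Choosing such a pair with $n_k<N_{k'}$ and writing $N_{k'}-n_k=\ell d$, I obtain
\begin{equation*}
R_{k'}<\|P_{N_{k'}}\|\leq \|P_{n_k}\|+\ell C \leq \tfrac{1}{k}+\ell C,
\end{equation*}
which should be ruled out by the freedom (cf.\ Remark \ref{rem1}) to choose $R_{k'}$ in the inductive step dominating any fixed linear function of $N_{k'}-n_k$. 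Once $\mathcal{F}_{n_i}\neq\mathcal{F}_{n_j}$ for every $i\neq j$ is established, the Fatou component $\mathcal{F}_0$ containing $P_0$ is wandering (the iterates $F^{n_k}(\mathcal{F}_0)=\mathcal{F}_{n_k}$ are pairwise distinct), and conditions (d)--(e) exhibit the orbit of $P_0$ as both containing a bounded and an unbounded subsequence, making $\mathcal{F}_0$ oscillating.

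The hard step is turning the slow variation estimate into an honest contradiction: the constraint is only a \emph{within-residue-class} Lipschitz bound, so one must ensure in the construction that for every admissible step $d$ (arising from a pair $i<j$ fixed earlier in the induction) the radii $R_{k'}$ chosen at later stages grow fast enough to defeat the gap $\ell C=(N_{k'}-n_k)C/d$ along the residue class in which $n_k$ and $N_{k'}$ collide. I expect this to require a diagonal refinement of the choices in Proposition \ref{propgeneral}, or alternatively an argument exploiting the saddle structure at $0$: since $P_{n_k}\to 0$ forces $0\in\partial\mathcal{F}$ and $0$ lies in the Julia set, one may hope to contradict equicontinuity on $\mathcal{F}$ using the unstable direction of $F$ at $0$. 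A more subtle, residue-class-free proof along these lines would be cleaner, but requires careful care since a priori different $P_{n_k}$ need not lie in a common Fatou component.
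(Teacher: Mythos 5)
There is a genuine gap at exactly the point you flag as ``the hard step'': your slow-variation estimate $\|P_m-P_{m+d}\|\le C$ does not contradict properties (d) and (e), and the proposed repair is not available. Properties (d)--(e) say nothing about how many iterates the orbit takes to travel from $\B(0,\tfrac1k)$ out past $R_{k}$; in the construction the length $M$ of each new oscillation comes out of the $\lambda$-Lemma \emph{after} $R_{k+1}$ has been fixed, so you cannot arrange $R_{k'}$ to dominate a linear function of $N_{k'}-n_k$. Worse, the constants $C$ (the diameter bound coming from equicontinuity on the hypothetical common component of the limit map $F$) and $d=n_j-n_i$ are only defined relative to the contradiction hypothesis and to the finished automorphism, so they cannot be fed back into the inductive choices at all; and the lemma is supposed to follow from the stated properties of $F$, not from a strengthened construction. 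So the pigeonhole/residue-class argument, as written, cannot close, and the proof is incomplete.

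The paper's proof is the ``residue-class-free'' argument you gesture at in your last sentence, but it does not use equicontinuity-diameter bounds on a path; it uses that the balls $\B(P_{n_k},\beta_{n_k})$ shrink ($\beta_n\to 0$), so every limit function on a component $\mathcal{F}_{n_k}$ is \emph{constant}. Assuming $\mathcal{F}_{n_i}=\mathcal{F}_{n_j}$ with $k:=n_i-n_j$, one uses the saddle fixed point at $0$ (eigenvalues $\tfrac12$ and $2$) to find a neighbourhood $U$ of $0$ containing no periodic points of period $\le k$ other than $0$, and the oscillation of $(P_n)$ to extract a subsequence $P_{m_\ell}\to z\in U\setminus\{0\}$. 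Then $F^{m_\ell-n_j}(P_{n_j})=P_{m_\ell}\to z$ while $F^{m_\ell-n_j}(P_{n_i})=F^{k}(P_{m_\ell})\to F^{k}(z)\neq z$, contradicting constancy of the limit on the common component. Your diameter estimate and the deduction of ``wandering and oscillating'' from pairwise distinctness (via a small pigeonhole, the same level of detail as the paper) are fine; it is the central contradiction that needs to be replaced by an argument of this kind.
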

\begin{proof}Since $\beta_n\to 0$, by (\ref{palle}) it follows that all limit functions on each $\mathcal{F}_{n_j}$ are constants. 
We claim that $\mathcal{F}_{n_i}\neq \mathcal{F}_{n_j}$ if and only if $j\neq i$, which together with $(i)$ and $(ii)$ implies that they are all oscillating wandering domains.
Assume by contradiction that $\mathcal{F}_{n_j}=\mathcal{F}_{n_i}$, and set $k:=n_i-n_j$.
Since the origin is a fixed point and  $d_0F$ is diagonal matrix with eigenvalues equal to $\frac{1}{2}$ and $2$, there exists a neighbourhood  $U$  of the origin that contains no periodic points of order less than or equal to $k$.
 Since the sequence of points $(P_n)$ oscillates, there exists a  subsequence  $(P_{m_\ell})$  such that $P_{m_\ell}\to z \in U \setminus \{0\}.$ But then
$$F^{m_\ell-n_j}(P_{n_i})=F^{n_i-n_j}(P_{m_\ell})\to F^k(z)\neq z,$$
which contradicts $F^{m_\ell-n_j}(P_{n_j})=P_{m_\ell}\to z$.

This completes the proof of the existence of an oscillating wandering domain. 

\end{proof}
% 
%Next show that 
% $$\Omega_F:=\bigcup_{k=0}^{\infty} F^{-n_k}(\mathbb{B}(P_{n_k},\beta_{n_k}))$$
%is contained in an oscillating Fatou component $\mathcal{F}_0$. It suffices to prove that for all $k\geq 0$, the ball $\B(P_{n_k},\beta_{n_k})$ is contained in the Fatou set. But this follows from (\ref{palle}) since
%the Euclidean diameter of $F^j(\mathbb{B}(P_{n_k},\beta_{n_k}))$ is bounded for all  $j\geq 0$.
%For all $j\geq 0$ denote $\mathcal{F}_{n_j}$ the Fatou component containing $\B(P_{n_j},\beta_{n_j})$. Since $\Omega_F$ is connected, it is contained in the Fatou component $\mathcal{F}_{0}$.

\medskip

Let $\mathcal{F}_0$ be an oscillating wandering domain of $F$ containing $\B(P_{0},\beta_{0})$. The following theorem tells us how we can use the sequence $(H_k)$ to determine the complex structure of $\mathcal{F}_0$. 

\begin{theorem}\label{fat} Let $F$ and $(\beta_{n_{k}})_{k\geq 0}$ be as above. If the sequence of automorphisms $(H_{k})_{k\geq 1}$ satisfies $\|H_{k}(z)\|\leq \beta_{n_{k}}^{k}\|z\|$ on $\B$ for all $k\geq 1$  then the oscillating wandering Fatou component $\mathcal{F}_0$ is biholomorphic to $\Omega_H$.
\end{theorem}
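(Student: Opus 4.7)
The plan is to identify the Fatou component $\mathcal{F}_0$ with the basin $\Omega_G$ of the normalized return cocycle $(G_k)$, and then transport the identification to $\Omega_H$ via Lemma \ref{lem:compare}.

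For the comparison step, note first that since $\beta_{n_k}<1$ the hypothesis $\|H_k(z)\|\le\beta_{n_k}^k\|z\|$ on $\overline{\B}$ yields $H_k(\overline{\B})\subset\B(0,\beta_{n_k}^k)\subset\subset\B$, in particular $H_k(\B)\subset\subset\B$ for every $k\ge 1$. Combined with $G_k(\B)\subset\B$ (property (iv) above) and the bound $\|G_k-H_k\|_{\overline{\B}}\le\varepsilon(H_1,\dots,H_k)$ from \eqref{saruman}, Lemma \ref{lem:compare} produces a biholomorphism $\Omega_H\cong\Omega_G$.

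Next I would build a canonical embedding $\Omega_G\hookrightarrow\mathcal{F}_0$. For $k\ge 0$ set $\psi_k:=F^{-n_k}\circ\Phi_{n_k}$, so that $U_k:=\psi_k(\B)=F^{-n_k}(\B(P_{n_k},\beta_{n_k}))\subset\mathcal{F}_0$ by \eqref{palle}. The definition $G_k=\Phi_{n_k}^{-1}\circ F^{n_k-n_{k-1}}\circ\Phi_{n_{k-1}}$ gives at once the cocycle identity $\psi_k\circ G_k=\psi_{k-1}$, whence $\psi_k\circ G_{k,0}=\psi_0=\Phi_0$ for every $k\ge 0$. Consequently the affine map $\Phi_0\colon w\mapsto P_0+\beta_0 w$, restricted to $\Omega_G$, is an injective holomorphic map into $\mathcal{F}_0$ whose image is the increasing union $\bigcup_{k\ge 0}U_k$.

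The main obstacle is the surjectivity $\mathcal{F}_0=\bigcup_k U_k$. The inclusion $F^j(\B(P_0,\beta_0))\subset\B(P_j,\beta_j)$ with $\beta_j\searrow 0$ forces every subsequential limit of the normal family $(F^n|_{\mathcal{F}_0})$ to be constant on $\B(P_0,\beta_0)$ and hence, by connectedness, on all of $\mathcal{F}_0$; along $n_k$ the limit must be $\lim P_{n_k}=0$, so $F^{n_k}\to 0$ locally uniformly on $\mathcal{F}_0$, and in particular $\mathrm{diam}(F^{n_k}(K))\to 0$ for every compact $K\subset\mathcal{F}_0$. For $z\in\mathcal{F}_0$ and a compact $K\ni z,P_0$ this yields $\|F^{n_k}(z)-P_{n_k}\|\le\mathrm{diam}(F^{n_k}(K))\to 0$. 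The hard part is promoting this qualitative decay to the quantitative bound $\|F^{n_k}(z)-P_{n_k}\|<\beta_{n_k}$ required to place $z$ in $U_k$, and this is exactly where the strong contraction hypothesis is used essentially: together with \eqref{saruman}, the bound $\|H_k\|_{\overline{\B}}\le\beta_{n_k}^k$ forces $F^{n_k-n_{k-1}}$ to send $\overline{\B}(P_{n_{k-1}},\beta_{n_{k-1}})$ into the vastly smaller ball $\overline{\B}(P_{n_k},2\beta_{n_k}^{k+1})$, and I would propagate this super-exponential shrinking along the sequence, via Cauchy/Schwarz-type estimates across a chosen compact exhaustion of $\mathcal{F}_0$, to extract a rate $\mathrm{diam}(F^{n_k}(K))=O(\beta_{n_k}^{k+1})$, which eventually beats $\beta_{n_k}$. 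Once surjectivity is established, $\Phi_0|_{\Omega_G}$ is a biholomorphism onto $\mathcal{F}_0$, and combining with the first step gives $\mathcal{F}_0\cong\Omega_H$.
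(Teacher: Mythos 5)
Your first two steps coincide with the paper's: Lemma \ref{lem:compare} together with \eqref{saruman} gives $\Omega_H\cong\Omega_G$, and $\Phi_0$ identifies $\Omega_G$ with $\Omega_F:=\bigcup_k U_k\subset\mathcal{F}_0$. The genuine gap is in the surjectivity step $\mathcal{F}_0=\Omega_F$. The contraction hypothesis, transported through \eqref{saruman} (say in the form $\|G_k\|\le 2\|H_k\|\le 2\beta_{n_k}^k$ on $\overline{\B}$), controls $F^{n_k-n_{k-1}}$ \emph{only} on the ball $\overline{\B}(P_{n_{k-1}},\beta_{n_{k-1}})$. For a compact $K\subset\mathcal{F}_0$ not already contained in some $U_{k-1}$, the set $F^{n_{k-1}}(K)$ is precisely \emph{not} contained in that ball (if it were, $K\subset U_{k-1}$ and there would be nothing to prove), and off the ball there is no estimate whatsoever on $F^{n_k-n_{k-1}}$: it is a global automorphism that may expand arbitrarily strongly arbitrarily close to $\overline{\B}(P_{n_{k-1}},\beta_{n_{k-1}})$. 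Cauchy-type estimates cannot bridge this, because they would require a priori bounds for $F^{n_k-n_{k-1}}$ on a fixed larger region, which are not available from the construction. So the asserted rate $\mathrm{diam}\bigl(F^{n_k}(K)\bigr)=O(\beta_{n_k}^{k+1})$ for arbitrary compacts $K\subset\mathcal{F}_0$ is essentially equivalent to the statement being proved, and your argument for it is circular; the qualitative convergence $F^{n_k}\to 0$ on $\mathcal{F}_0$ that you do establish gives no rate relative to $\beta_{n_k}$.

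The paper's proof supplies exactly the missing idea: it never upgrades the decay to a rate off $\Omega_F$, but instead runs the plurisubharmonic method. One sets $\Psi_k(z)=\log\|F^{n_k}(z)-P_{n_k}\|/(-k\log\beta_{n_k})$, a plurisubharmonic sequence on $\mathcal{F}_0$; the qualitative convergence alone gives $\limsup_k\Psi_k\le 0$ on $\mathcal{F}_0$; at points of $\mathcal{F}_0\setminus\Omega_F$ one has $\|F^{n_k}(z)-P_{n_k}\|\ge\beta_{n_k}$, hence $\Psi_k\ge -1/k$ and the regularized upper limit $\Psi^\star$ equals $0$ there; while on $\Omega_F$ the hypothesis $\|H_k(z)\|\le\beta_{n_k}^k\|z\|$ gives $\Psi_k\le -1+o(1)$, so $\Psi^\star\le -1$. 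If $\mathcal{F}_0\setminus\Omega_F$ were nonempty, $\Psi^\star$ would attain its maximum $0$ at an interior point of the connected open set $\mathcal{F}_0$ and hence be identically $0$, contradicting $\Psi^\star\le -1$ on the nonempty open set $\Omega_F$. Thus the quantitative hypothesis is used only where the orbit genuinely enters the shrinking balls, and the points outside are handled by the normalization of the logarithm and the maximum principle rather than by any propagated contraction estimate; this is the step your proposal is missing.
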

\begin{proof}First note that such sequence of $(H_k)_{k\geq 1}$ exists due to Remark \ref{rem1}.  
We define 
$$
\Omega_F:=\bigcup_{k=0}^{\infty} F^{-n_k}(\mathbb{B}(P_{n_k},\beta_{n_k})).
$$
Since all balls $\mathbb{B}(P_{n_k},\beta_{n_k})$ belong to the Fatou set 
%and since $\Omega_F$ is connected 
it follows that  $\Omega_F\subset \mathcal{F}_0$.  Recall that
$\Phi_{0}(z)=P_0+\beta_0\cdot z$ and observe that $\Omega_F=\Phi_{0}(\Omega_G)$.

By \eqref{saruman} and  Lemma \ref{lem:compare} we know  that the basins $\Omega_H=\bigcup_{k\geq 0} H_{k,0}^{-1}(\B)$ and $\Omega_G=\bigcup_{k\geq 0} G_{k,0}^{-1}(\B)$ are  biholomorphic, hence $\Omega_F$ is  biholomorphic to $\Omega_H$.  It remains to prove that $\Omega_F=\mathcal{F}_0$. 

This will be done using the plurisubharmonic method which was introduced in \cite{ABFP} and further developed in  \cite[Theorem 7]{ABTP}. The idea of the proof is the following. We construct a plurisubharmonic function $\psi^*$ that is defined on $\mathcal{F}_0$ and it satisfies $\psi^*\equiv 0$ on $\mathcal{F}_0\backslash \Omega_F$ and $\psi^*\leq -1$  on $\Omega_F$. Due to the maximum principle the existence of such function is possible if and only if $\mathcal{F}_0\backslash \Omega_F=\emptyset$.
\medskip

First observe that, by taking smaller $\epsilon_k=\epsilon(H_1,\ldots,H_k)$ in the Proposition \ref{propgeneral} if necessary, we may assume that $$\|G_k(z)\|\leq 2\|H_k(z)\|$$ for all $z\in\B$ and all $k\geq1$. 

Define a sequence of plurisubharmonic functions
$$
\Psi_k(z) :=  \frac{\log \| F^{n_k}(z)-P_{n_k}\|}{-k \log \beta_{n_k}}.
$$
Let $\Psi = \limsup_{k\to \infty} \Psi_k $ on $\mathcal{F}_0$ and let  $\Psi^\star$ be its upper semi-continuous regularization (see \cite{Klimek}), hence $\Psi^\star$ is a plurisubharmonic function on $ \mathcal{F}_0$. 
Since the sequence $(P_{n_k})$ is bounded, it follows that for all compact subsets $K\subset \mathcal{F}_0$, we have $\| F^{n_k}(z)-P_{n_k}\|\rightarrow 0$. This implies that $\Psi\leq0 $ on $\mathcal{F}_0$, and hence $\Psi^\star\leq0 $ on $\mathcal{F}_0$.

Recall that $\Omega_F=\Phi_0(\Omega_G)$ and observe that
\begin{align*}
\Psi_k(z)&=\frac{\log \| \Phi_{n_k}\circ G_k\circ\ldots \circ G_1\circ\Phi_0^{-1}(z)-P_{n_k}\|}{-k \log \beta_{n_k}}\\
&=\frac{\log \| G_k\circ\ldots \circ G_1\circ\Phi_0^{-1}(z)\|+\log \beta_{n_k}}{-k \log \beta_{n_k}}\\
&=\frac{\log \| G_k\circ\ldots \circ G_1\circ\Phi_0^{-1}(z)\|}{-k \log \beta_{n_k}}-\frac{1}{k}
\end{align*}

Given any $z\in \mathcal{F}_0$ we know that the sequence $G_k\circ\ldots \circ G_1\circ\Phi_0^{-1}(z)$ is bounded. In particular $G_k\circ\ldots \circ G_1\circ\Phi_0^{-1}(z_0)\in \B(0,1)$ for some $k\geq 1$ if and only if $z_0\in \Omega_F$. This implies that $\Psi(z)=\Psi^{\star}(z)=0$ on $\mathcal{F}_0\backslash \Omega_F$.

On the other hand, let $z_0\in \Omega_{F},$ and let $k\geq 0$ be large enough such that
$z_k:=G_{k-1}\circ\ldots \circ G_1\circ\Phi_0^{-1}(z_0) \in B(0,1)$.
Then 
\begin{align*}
\Psi_k(z_0)=\frac{\log \|G_k(z_k)\|}{-k\log\beta_{n_k}}-\frac{1}{k}&\leq \frac{\log 2\|H_k(z_k)\|}{-k\log\beta_{n_k}}-\frac{1}{k}\\
&\leq \frac{\log 2+k\log\beta_{n_k} }{-k\log\beta_{n_k}}-\frac{1}{k}\\
&\leq -1+\frac{\log 2 }{-k\log\beta_{n_k}}-\frac{1}{k}.
\end{align*}
It follows that $\Psi(z)\leq -1$ for all $z \in \Omega_{F}$, which implies that  $\Psi^\star(z)\leq -1$ for all $z \in \Omega_{F}$. Since $\mathcal{F}_0$ is open and connected, it follows from the maximum principle for plurisubharmonic functions that $\mathcal{F}_0 \setminus \Omega_{F}$ must be empty, which completes the proof.
\end{proof}

{\bf Example 1.} For $H_k(z)=\beta_{n_k}^k z$ it is easy to see that $\Omega_H=\C^m$ and therefore by Theorem \ref{fat} the oscillating wandering Fatou component $\mathcal{F}_0$ is biholomorphic to $\C^m$. 
\medskip

{\bf Example 2.} Let 
$$
H_k(z_1,\ldots,z_m)=\left(\left(\frac{z_1}{2}\right)^{d_k}+2^{-d_k\cdots d_1}z_m, 2^{-d_k\cdots d_1}z_1,\ldots,2^{-d_k\cdots d_1} z_{m-1}\right)
$$
 where integers $d_k>0$ are chosen so that 
$\|H_{k}(z)\|\leq \beta_{n_{k}}^{k}\|z\|$ on $\B$ for all $k\geq 1$. By \cite[Proposition 3]{ABTP}, which is a slightly  modified version of \cite[Theorem 1.4.]{F} we know that $\Omega_H$  is a Short $\C^m$, and therefore by Theorem \ref{fat} the oscillating wandering Fatou component $\mathcal{F}_0$ is also a Short $\C^m$. Such a domain supports a non-constant bounded plurisubharmonic function which implies that $\mathcal{F}_0$ is not biholomorphic to $\C^m$.

\section{Escaping wandering Ball}
In this section we prove Theorem \ref{thmescaping} using the following proposition. 

\begin{proposition}\label{propescape}There exists a sequence $(F_k)_{k\geq 0}$ of holomorphic automorphisms of $\C^m$, disjoint sequences of points $(P_n)_{n\geq 0}$,  $(T^j_n)_{j \geq 1, n \geq 0 }$, $(S_j)_{j\geq1}$ and sequences positive real numbers   $(R_k)_{k\geq 0}\nearrow \infty$ and  $(r_k)_{k\geq 0}\nearrow \infty$,  such that the following properties are satisfied:

\begin{enumerate}[label=(\alph*)]
\item $\B(0,\frac{r_{k-1}}{2})\subset\subset F_k(\B(0,R_k))$ for all $k\geq 1$,
\item   $\|F_{k}-F_{k-1}\|_{B(0,R_{k-1})}\leq 2^{-k}$ for all $k\geq 1$,
%\item $F_{k}(P_n)=P_{n+1}$ for all $0\leq n< k$,
\item $\|P_{k}\|> R_k$ for all $k\geq 0$,
\item $F_{k}^j(\B(P_{0},1))\subset \subset\B(P_{j},2),$ for all $1\leq j\leq k$  and all $ k\geq 0$.
\item points $T^j_0$ accumulate densely on the $b\B(P_0,1)$.
\item $F_k(T_n^j)=T_{n+1}^j$ for all $ 1\leq j\leq k$ and all $0\leq n\leq j-1$
\item $T_{j}^j\in \B(S_j,1)$ for all $j\geq 1$.
\item $F_k(S_j)=S_j$ for all $1\leq j\leq k$.
\item $\|F_k(z)-S_j\|\leq \frac{k}{2(k+1)}\|z-S_j\|$ on $\overline{\B}(S_j,1)$ for all $1\leq j\leq k$.
\end{enumerate}
\end{proposition}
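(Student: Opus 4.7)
I would prove this by induction on $k$, using Theorem \ref{al} at each step. Fix in advance a countable dense sequence $(\tau_j)_{j\geq1}\subset b\B(P_0,1)$. The base case $k=0$ is trivial: take $F_0=\mathrm{id}$, $R_0=r_0=1$, and any $P_0$ with $\|P_0\|>1$; conditions (a)--(i) hold vacuously.

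For the inductive step, assume $F_k$ and all data through index $k$ satisfy (a)--(i), and that $r_k$ was selected at step $k$ so that $K_k:=F_k^{-1}(\overline\B(0,r_k))$ is polynomially convex (as the automorphism preimage of a ball), contains all preserved pieces $W_j:=F_k^j(\overline\B(P_0,1))$ for $0\leq j\leq k-1$, the points $T^i_n$ for $1\leq i\leq k$, and the balls $\overline\B(S_j,1)$ for $j\leq k$, but does \emph{not} contain $W_k$. To extend the escaping orbit, create one new attracting fixed point, and add one new chain landing in its basin, I choose $R_{k+1}>R_k$, then $r_{k+1}>r_k$ with $F_k(\overline\B(0,R_{k+1}))\subset\B(0,r_{k+1})$; pick $P_{k+1}$ and $S_{k+1}$ so that $\overline\B(P_{k+1},3)$ and $\overline\B(S_{k+1},1)$ are mutually disjoint and disjoint from $\overline\B(0,r_{k+1})$; fix $T^{k+1}_{k+1}\in\B(S_{k+1},1)$ at distance $\tfrac34$ from $S_{k+1}$; and pick $T^{k+1}_0$ strictly outside $\overline\B(P_0,1)$ and within $1/(k+1)$ of $\tau_{k+1}$, then set $\tilde T_j:=F_k^j(T^{k+1}_0)$. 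Because $T^{k+1}_0\notin\overline\B(P_0,1)$, the iterate $\tilde T_k$ is strictly outside $W_k$, so some $\delta>0$ makes $\overline\B(\tilde T_k,\delta)\cap W_k=\emptyset$.

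I then apply Theorem \ref{al} to the pairwise disjoint compact sets $A_1=K_k$, $A_2=W_k$, $A_3=\overline\B(\tilde T_k,\delta)$, $A_4=\overline\B(S_{k+1},1)$, with target automorphisms $q_1=F_k$, the translation $q_2(z)=z+(P_{k+1}-P_k)$, the translation $q_3(z)=z+(T^{k+1}_{k+1}-\tilde T_k)$, and the contraction $q_4(z)=S_{k+1}+\tfrac{k+1}{2(k+2)}(z-S_{k+1})$. Only $A_1$ is non-starshapelike, and it is polynomially convex; with $\delta$ small enough that $q_3(A_3)$ is disjoint from $q_4(A_4)=\overline\B(S_{k+1},\tfrac{k+1}{2(k+2)})$, the spatial separation makes $\bigcup B_i$ polynomially convex. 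Theorem \ref{al} produces $F_{k+1}\in\mathrm{Aut}(\C^m)$ which is $\epsilon_{k+1}$-close to each $q_i$ on $A_i$, with prescribed $1$-jets pinning $P_k\mapsto P_{k+1}$, $\tilde T_k\mapsto T^{k+1}_{k+1}$, $S_{k+1}\mapsto S_{k+1}$ with derivative $\tfrac{k+1}{2(k+2)}\mathrm{Id}$, and preserving all previously constructed finite orbit chains and attracting fixed points. Setting $T^{k+1}_j:=F_{k+1}^j(T^{k+1}_0)$ makes (f) automatic; taking $\epsilon_{k+1}$ small enough keeps $T^{k+1}_k\in\overline\B(\tilde T_k,\delta)$, so $F_{k+1}(T^{k+1}_k)\in q_3(A_3)\subset\B(S_{k+1},1)$, giving (g); approximation on $A_1\supset\overline\B(0,R_k)$ gives (b) and also (a), exactly as in the proof of Proposition \ref{propgeneral} (using $F_{k+1}(K_k)\supset\B(0,r_k/2)$, possibly after post-composition by a near-identity automorphism); the $q_2$-approximation on $A_2$ gives (d) at $j=k+1$, and the inductive (d) survives for $j\leq k$ by the $A_1$ approximation; the $q_4$-approximation together with the jet condition secures (h) and (i) at $j=k+1$, while the $j\leq k$ case of (i) survives small perturbation because $\tfrac{k}{2(k+1)}<\tfrac{k+1}{2(k+2)}$. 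Finally, $r_{k+1}$ is chosen so that the new $K_{k+1}$ excludes $W_{k+1}=F_{k+1}^{k+1}(\overline\B(P_0,1))$; this may require including $\overline\B(P_{k+1},3)$ as a further Anders\'en--Lempert set at this step, mapped to a faraway region.

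The main obstacle is topological: if $T^{k+1}_0$ were on $b\B(P_0,1)$, then $\tilde T_k$ would lie in $W_k$ and continuity would force $F_{k+1}(\tilde T_k)\in F_{k+1}(W_k)\subset\overline\B(P_{k+1},2)$, incompatible with $F_{k+1}(\tilde T_k)\in\B(S_{k+1},1)$. Taking $T^{k+1}_0$ strictly outside $\overline\B(P_0,1)$ makes $A_2$ and $A_3$ genuinely disjoint so that Anders\'en--Lempert can send them to completely unrelated regions. Density (e) is then automatic because the sequence $(\tau_j)$ is fixed in advance and $T^{k+1}_0$ is chosen within $1/(k+1)$ of $\tau_{k+1}$ at each step.
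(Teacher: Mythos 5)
Your overall strategy is the same as the paper's: induct on $k$, and at each step apply Theorem \ref{al} to essentially the same four-piece configuration (the compact $K_k=F_k^{-1}(\overline\B(0,r_k))$ on which $F_k$ is kept, the current image of the traveling ball pushed toward $P_{k+1}$, the newly released marked point sent into the new attracting region, and the ball around $S_{k+1}$ contracted), followed by a post-composition correcting for property (a). The only real deviation is bookkeeping: you choose $T_0^{k+1}$ adaptively near a prescribed dense sequence on $b\B(P_0,1)$, whereas the paper fixes all $T_0^j$ at the start in $\B(P_0,2)\setminus\overline\B(P_0,1)$ with $\|T_0^j-P_0\|\searrow 1$ and, at step $k+1$, transports $\mathcal{W}=F_k^k(\overline\B(P_0,\rho_{k+1}))$ with $\|T_0^{k+2}-P_0\|<\rho_{k+1}<\|T_0^{k+1}-P_0\|$. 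Your adaptive choice is adequate for (e), and injectivity of $F_k^k$ does separate the released point from $W_k$.

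Two points, however, need repair. First, taking $A_2=W_k=F_k^k(\overline\B(P_0,1))$ with no margin does not yield (d) for $j=k+1$: the set you must control is $F_{k+1}^{k+1}(\overline\B(P_0,1))=F_{k+1}\bigl(F_{k+1}^k(\overline\B(P_0,1))\bigr)$, and $F_{k+1}^k(\overline\B(P_0,1))$ in general bulges slightly outside $W_k$, where you have no estimate on $F_{k+1}$ (closeness to $q_2$ holds only on $W_k$, and the modulus of continuity of $F_{k+1}$ off $W_k$ cannot be fixed before $F_{k+1}$ is produced, so "choose $\epsilon_{k+1}$ small enough" does not close this loop). This is precisely what the paper's radius $\rho_{k+1}>1$ is for: the Anders\'en--Lempert set is the image of a strictly larger ball, so the tracked ball's iterates stay in its interior. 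In your scheme you must likewise take $A_2=F_k^k(\overline\B(P_0,\rho))$ for some $1<\rho<\|T_0^{k+1}-P_0\|$, chosen after $T_0^{k+1}$; this keeps $A_2$ starshapelike and still disjoint from $\overline\B(\tilde T_k,\delta)$. Second, your inductive hypothesis that $K_k$ ``does not contain $W_k$'' is too weak for Theorem \ref{al}: you need $K_k\cap\overline\B(P_k,2)=\emptyset$ and, crucially, polynomial convexity of the union $K_k\cup\overline\B(P_k,2)$, since disjoint polynomially convex compacts need not have polynomially convex union. This must be carried as an explicit hypothesis and restored at the end of every step; that is exactly the role of the far-away ball $\B(Q',\theta)$, the map $\psi$, and the correcting automorphism $h$ in the paper, which you mention only in passing rather than building into the induction.
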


Before proving this proposition, let us show that it implies the existence of an escaping wandering Fatou component which is the unit ball.  

\subsection{Proof of Theorem \ref{thmescaping}}Let $(F_k)_{k\geq 0}$ be a sequence of holomorphic automorphisms of $\C^m$ given by Proposition  \ref{propescape}. This sequence converges uniformly on compacts to a holomorphic automorphism $F$ of $\C^m$. Moreover there exist disjoint sequences of points $(P_n)_{n\geq 0}$,  $(T^j_0)_{j \geq 1}$, $(S_j)_{j\geq1}$ and strictly increasing sequence of positive real numbers  $(R_k)_{k\geq 0}$  such that the following holds:
\begin{enumerate}
\item $\|P_{k}\|> R_k$ for all $k\geq 0$,
\item $F^k(\B(P_0,1))\subset \subset\B(P_k,2)$ for all $k\geq 0$,
\item $F(S_j)=S_j$ and $\|F(z)-S_j\|\leq\frac{1}{2}\|z-S_j\|$ on $\overline{\B}(S_j,1)$  for all $j\geq 1$
\item Points $T^j_0$ accumulate densely on the $b\B(P_0,1)$ and $F^k(T^j_0)\rightarrow S_j$ as $k\rightarrow\infty$,
\end{enumerate}
Observe that properties $(1)$ and $(2)$ follow from properties $(c)$ and $(d)$ of the proposition. These properties imply that the forward orbit of every point in $\B(P_0,1)$ eventually leaves every compact set. Furthermore since the Euclidean  diameter of $F^j(\B(P_0,1))$ is bounded for all $j\geq 0$ it follows that $\B(P_0,1)$ is contained in some Fatou component $\mathcal{F}_0$. It remains to prove that $\mathcal{F}_0 = \B(P_0,1)$.

Observe that properties  $(3)$ and $(4)$ follow from properties $(e)-(i)$ of the proposition. Namely points $S_j$ are attracting fixed point of the map $F$ and all the points in the ball  $\B(S_j,1)$ converge towards $S_j$ under the iteration. Moreover $F$ was constructed in such a way that the $F$-orbit of any $T^j_0$ eventually lands in one of balls $\B(S_j,1)$, and hence it converges to $S_j$. It follows that each $T^j_0$ is contained in the attracting Fatou component of $S_j$.  

Notice that since  $\mathcal{F}_0$ is an  open set, the condition $\mathcal{F}_0 \neq \B(P_0,1)$ would imply that there exists $j>0$ so that $T_0^j\in \mathcal{F}_0$. 
But this would contradict the normality of the iterates $(F^n)$ on $\mathcal{F}_0 $, since we know that on the small neighbourhood of $T_0^j$ the sequence of iterates converges to the constant $S_j$ but in the same time the sequence of iterates compactly diverges on $\B(P_0,1)$. $\square$

\subsection{Proof of Proposition \ref{propescape}} We prove this proposition by induction on $k$.

\medskip
{\bf Base case:}  We start the induction by letting $F_0=\rm{id}$,  $r_0=2$ and $R_0=1$. We define $K_0=F^{-1}_0(\overline{\B}(0,r_{0}))=\overline{\B}(0,2)$. Finally we choose a point  $P_0$ satisfying $\|P_0\|>4$ and a sequence $(T_0^j)_{j\geq 1}\subset \B(P_0,2)\backslash \overline{\B}(P_0,1)$ which  accumulates densely on $b\B(P_0,1)$ and for which the sequence of distances  $\|T_0^j-P_0\|\rightarrow 1$ is strictly decreasing. In this setting all conditions $(a)$---$(i)$ are satisfied for $k =0$.

\medskip

{\bf Induction hypothesis:} Let us suppose that conditions $(a)$---$(i)$ hold for certain $k$ and that for $K_k:=F^{-1}_k(\overline{\B}(0,r_{k}))$ we have:
\begin{itemize}
\item  $\overline{\B}(0,R_{k})\subset K_k$
\item $K_k\cap \overline{\B}(P_{k},2)=\emptyset$ 
\item $K_k\cup \overline{\B}(P_{k},2)$ is polynomially convex.
\end{itemize}
 
\medskip 
{\bf Inductive step:}
We proceed with the constructions satisfying the conditions for $k+1$. 

First we choose a point $S_{k+1}$ so that:
\begin{enumerate}[label=(\roman*)]
\item[($\mathcal{A}1$)]  The ball $\overline{\B}(S_{k+1},1)$ is disjoint from the sets $K_k$, $F_k(K_k)$ and $\overline{\B}(P_k,2)$
\item[($\mathcal{A}2$)] $\overline{\B}(S_{k+1},1)\cup K_k\cup \overline{\B}(P_k,2)$ is  polynomially convex.
\item[($\mathcal{A}3$)] $\overline{\B}(S_{k+1},1)\cup F_k(K_k)$ is  polynomially convex.
\end{enumerate}
Next choose $R_{k+1}>R_k+1$ and point $P_{k+1}$ so that:

\begin{enumerate}[label=(\roman*)]
\item[($\mathcal{B}1$)]  $\overline{\B}(S_{k+1},1)\cup K_k\cup  F_k(K_k)\cup \overline{\B}(P_k,2) \subset \B(0,R_{k+1})$
\item[($\mathcal{B}2$)]$\overline{\B}(P_{k+1},2)$ and $\overline{\B}(0,R_{k+1})$ are disjoint
\end{enumerate}

 Finally choose $1<\rho_{k+1}<2$ such that 
 $$
 \|T_0^{k+2}-P_0\|<\rho_{k+1}<\|T_0^{k+1}-P_0\|
 $$ and define a starshapelike compact set 
 $$
 \mathcal{W}:=F_k^{k}(\overline \B(P_{0},\rho_{k+1}))\subset \B(P_{k},2).
 $$
 In the terminology of Theorem \ref{al} we define 
\begin{equation*}
A_1:=K_k,\qquad A_2:= \overline{\B}(S_{k+1},1),\qquad A_3:= \mathcal{W},\qquad A_4:= T_k^{k+1}.
\end{equation*} 
It follows from our construction that all these sets are pairwise disjoint and that their union is polynomially convex. Also note that all these sets are all starshapelike.
Next we define $q_1(z):=F_k(z)$ on $A_1$, $q_2(z):= \frac{(k+1)}{2(k+2)+1}(z-S_{k+1})+S_{k+1}$ on $A_2$, $q_3(z)=\frac{z-P_k}{2}+P_{k+1}$ on $A_3$ and $q_4(z)=z-T_k^{k+1}+Q_0+\frac{2}{3}$ on $A_4$.  Observe that their images 
$B_j:=q_j(A_j)$ where $1\leq j  \leq4$ are  pairwise disjoint and their union is polynomially convex. Moreover we have $B_3\subset\B(P_{k+1},2)$.

Note that by the inductive assumption all balls $\overline{\B}(S_{j},1)$ for $1\leq j\leq k$ are contained in the compact set $K_k$.  By Theorem \ref{al} there exists an automorphism $g_{k+1}$ such that

\begin{enumerate}[label=(\Roman*)]
\item $\|F_k-g_{k+1}\|\leq \delta_k$ on $K_k$
\item $\|q_3-g_{k+1}\|\leq \delta_k$ on $\mathcal{W}$.
\item $g_{k+1}(S_j)=S_j$  for all $1\leq j\leq k+1$
\item $\|g_{k+1}(z)-S_{k+1}\|\leq \frac{k+1}{2(k+2)+\frac{1}{k+1}}\|z-S_{k+1}\|$ on $\overline{\B}(S_{k+1},1)$,
\item $g_{k+1}(T_n^j)=F_k(T_n^j)$ for all $1\leq j \leq k$ and $0 \leq n\leq j-1$,
\item $g_{k+1}(T_n^{k+1})=T_{n+1}^{k+1}$ for all $0 \leq n< k$ where  $T_{n}^{k+1}:=F_k^n(T_0^{k+1})$
\item $T_{k+1}^{k+1}:=g_{k+1}(T_{k}^{k+1})\in \B(S_{k+1},1)\backslash \overline{\B}(S_{k+1},\frac{1}{2})$ 
%\item  $g_{k+1}(T_n^{k+2})=T_{n+1}^{k+2}$ for all $0 \leq n <  N_k+M$  where $T_{n}^{k+2}:=F_k^n(T_0^{k+2})$, 

 \end{enumerate} 
where we have chosen  $\delta_k\leq \frac{1}{2^k}$ small enough such that:
\begin{enumerate}[label=(\roman*)]
\item $g_{k+1}^j(\B(P_0,\rho_{k+1}))\subset\subset  \B(P_j,2)$, for all $1\leq j\leq k+1$
\item $\|g_{k+1}(z)-S_{j}\|\leq \frac{k+1}{2(k+2)+\frac{1}{k+1}}\|z-S_{j}\|$ on $\overline{\B}(S_{j},1)$ for all $1\leq j\leq k$,
\end{enumerate}
\medskip 
At this point the automorphism $g_{k+1}$ already satisfies properties $(b)$--$(i)$ of the proposition and we continue similarly as in the proof of Proposition \ref{propgeneral}. We choose $r_{k+1} >r_k+1$  so that 
$$g_{k+1} (\B(0, R_{k+1})) \subset\subset \B(0, r_{k+1}).$$
 Since compact sets  $\overline{\B}(P_{k+1},2)$ and $\overline{\B}(0, R_{k+1})$  are disjoint starshapelike domains whose union is polynomially convex  the same holds for their images
 $$U:=g_{k+1}(\overline{\B}(P_{k+1},2)), \qquad V:=g_{k+1}(\overline{\B}(0, R_{k+1})).$$
  Let $Q'\in \C^2$ be a point  such that the ball $\overline{\B}(Q',\theta)$ lies in the complement of  $\overline{\B}(0, r_{k+1})$. Also let $\psi$ be a linear map satisfying $ \psi(U)\subset {\B}(Q',\theta).$
 
  By Theorem  \ref{al} there exists  an automorphism $h$ such that

\begin{enumerate}[label=(\Roman*)]
\item $\|h-{\rm id}\|_V\leq \delta'_k$,
\item $\|h-\psi\|_U\leq \delta'_k$,
\item $h(S_j)=S_j$ for all $0< j\leq k+1$,
\item $h(T_n^{j})=T_n^{j}$ for all $1\leq j\leq k+1$ and $1\leq n\leq j $

 \end{enumerate} 
where we have chosen  $\delta_k'\leq \frac{1}{2^k}$ small enough such that
\begin{enumerate}[label=(\roman*)]
\item $(h\circ g_{k+1})^j(\B(P_0,\rho_{k+1}))\subset\subset  \B(P_j,2)$, for all $1\leq j\leq k+1$
\item $\|(h\circ g_{k+1})(z)-S_{j}\|\leq \frac{k+1}{2(k+2)}\|z-S_{j}\|$ on $\overline{\B}(S_{j},1)$ for all $1\leq j\leq k+1$,
\end{enumerate}

Finally define $K_{k+1}:=F^{-1}_{k+1}(\B(0,r_{k+1}))$ and observe that  $\B(0,R_{k+1})\subset K_{k+1}$. Since  $$F_{k+1}(\B(P_{k+1},2))\subset\subset \B(Q',2),$$ and since the set
$\B(0,r_{k+1}) \cup  \B(Q',\theta)$ 
is polynomially convex it follows that $K_{k+1}$ and $\overline{\B}(P_{k+1},2)$   are disjoint and their union is polynomially convex.

 It is immediate that properties $(c)$---$(i)$ are satisfied for the $(k+1)$-th step. The properties $(a)$ and $(b)$ can be verified by following the last paragraph of the proof of Proposition \ref{propgeneral} verbatim. This concludes the inductive step.$\square$

 \medskip
 
\begin{remark}  We believe that by a slight modification of the above proof, in particular by choosing different map $q_3$,  one can construct examples of wandering balls with different interior dynamics, as it was recently done for transcendental functions in dimension one \cite{BEGRS}. 
\end{remark}

\section{Oscillating wandering Ball}
In the previous two sections we have seen how the tools of  Anders\'en--Lepert theory can be used to construct various examples of oscillating wandering domains and also of the escaping wandering ball. In this section we will see that by combining these two constructions we can construct an oscillating wandering ball. The proof of Theorem \ref{thmoscillating} is based on the following proposition which is a hybrid between Proposition \ref{propescape} and Proposition \ref{propgeneral}.

\begin{proposition}\label{propmain}
There exists a sequence  $(F_k)_{k\geq 0}$ of holomorphic automorphisms of $\C^m$,
disjoint sequences of points $(P_n)_{n\geq 0}$,  $(T^j_n)_{j \geq 1, n \geq 0 }$, $(S_j)_{j\geq1}$ with $(S_j)_{j}$ being  bounded away from the origin, sequences positive real numbers $(\beta_n)_{n\geq 0}\searrow 0$, $(\tau_n)_{n\geq 1}\searrow 0$ ,    $(R_k)_{k\geq 0}\nearrow \infty$,  $(r_k)_{k\geq 0}\nearrow \infty$, strictly increasing sequences of integers  $(n_k)_{k\geq 0}$ and $(N_k)_{k\geq 0}$ satisfying  $n_0=0$ and $N_{k-1}\leq n_k\leq N_k$, and   such that the following properties are satisfied:

\begin{enumerate}[label=(\alph*)]
\item $\B(0,\frac{r_{k-1}}{2})\subset\subset F_k(\B(0,R_k))$ for all $k\geq 1$,
\item $\|F_{k}-F_{k-1}\|_{B(0,R_{k-1})}\leq 2^{-k}$ for all $k\geq 1$,
\item $F_{k}(P_n)=P_{n+1}$ for all $0\leq n< N_k$,
\item $\|P_{n_k}\|\leq \frac{1}{k}$ for all $k\geq 1$,
\item $   \|P_{N_{k}}\|> R_{k}$ for all $k\geq 1$
% and $B(P_n,\beta_n)\subset\subset B(0,R_k)$ for every $0\leq n<N_{k}$,
\item for all $k\geq 1$ we have $\beta_j<\frac{1}{k+1}$ for $N_k<j\leq  N_{k+1}$.
\item $F_{k}^j(\B(P_{0},\beta_{n_0}))\subset \subset\B(P_{j},\beta_j),$ for all $1\leq j\leq N_{k}$  and all $ k\geq 0$.
\item points $T^j_0$ accumulate densely on the $b\B(P_0,1)$.
\item $F_k(T_n^j)=T_{n+1}^j$ for all $ 1\leq j\leq k$ and all $0\leq n\leq N_{j-1}$
\item $T_{N_{j-1}+1}^j\in \B(S_j,\tau_j)$ for all $j\geq 1$.
\item $F_k(S_j)=S_j$ for all $1\leq j\leq k$.
\item $\|F_k(z)-S_j\|\leq \frac{k}{2(k+1)}\|z-S_j\|$ on $\overline{\B}(S_j,\tau_j)$ for all $1\leq j\leq k$.
\end{enumerate}
\end{proposition}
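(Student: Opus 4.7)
The plan is to proceed by induction on $k$, running in parallel the two inductive constructions of Proposition \ref{propgeneral} (for the oscillating orbit $(P_n)$ via the $\lambda$-Lemma) and Proposition \ref{propescape} (for the attracting fixed points $S_j$ absorbing a dense sequence of boundary points $T_0^j$). For the base case, I would take $F_0$ to be the diagonal linear saddle with eigenvalues $\tfrac{1}{2}$ (multiplicity $\iota$) and $2$, set $\beta_0=1$, $n_0=N_0=0$, choose $P_0$ with $\|P_0\|>R_0$, and fix once and for all a sequence $(T_0^j)_{j\ge 1}\subset \B(P_0,2)\setminus\overline\B(P_0,1)$ accumulating densely on $b\B(P_0,1)$ with $\|T_0^j-P_0\|$ strictly decreasing to $1$, exactly as in the base case of Proposition \ref{propescape}.

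For the inductive step $k\mapsto k+1$ I would first run the $\lambda$-Lemma construction on $F_k$ at the saddle $0$ to produce a finite $F_k$-orbit $Q_0,\dots,Q_M$ satisfying $(\mathcal{A}1)$--$(\mathcal{A}3)$ of Proposition \ref{propgeneral}: it comes within $\tfrac{1}{k+1}$ of the origin at some time $\ell$ and escapes past $R_{k+1}$ at time $M$. Set $n_{k+1}:=N_k+\ell$, $P_{n_{k+1}}:=Q_\ell$, $\beta_{n_{k+1}}:=s_\ell$, with $s_\ell$ small enough that forward and backward $F_k$-iterates of $\overline{\B}(Q_\ell,s_\ell)$ stay in the narrow tube $\overline{\B}(Q_{\ell\pm j},\theta)$ for a small parameter $\theta$. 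Next, choose the new attracting center $S_{k+1}$ and radius $\tau_{k+1}>0$ so that $\overline{\B}(S_{k+1},\tau_{k+1})$ is far from $K_k$, $F_k(K_k)$, the entire tube $\bigcup_{j}\overline{\B}(Q_j,\theta)$, the ball $\overline\B(P_{N_k},\beta_{N_k})$, all previous attracting balls $\overline\B(S_j,\tau_j)$, and all previously constructed orbits $(T_n^j)_{j\le k}$, and so that the union of all these sets remains polynomially convex by Lemma \ref{lem:stability}. Finally, compute the orbit $T_n^{k+1}:=F_k^n(T_0^{k+1})$ for $0\le n\le N_k$ and pick a target point inside $\B(S_{k+1},\tau_{k+1})$ to be hit by $T_{N_k+1}^{k+1}$.

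The core of the argument is then a single invocation of Theorem \ref{al} with four starshapelike pieces: $A_1:=K_k\cup\overline\B(Q_M,\theta)$ with $q_1:=F_k$; $A_2:=\mathcal{W}:=F_k^{N_k-n_k}(\overline\B(P_{n_k},\beta_{n_k}))$ with $q_2:=\phi$ as in (\ref{phi}) (wiring $P_{N_k}$ to $Q_1$); $A_3:=\overline{\B}(S_{k+1},\tau_{k+1})$ with $q_3$ the linear contraction $z\mapsto S_{k+1}+\lambda_{k+1}(z-S_{k+1})$ with $|\lambda_{k+1}|<\tfrac{k+1}{2(k+2)}$; and $A_4:=$ a tiny ball around $T_{N_k}^{k+1}$ with $q_4$ a translation sending it deep inside $\B(S_{k+1},\tau_{k+1})$. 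Disjointness and polynomial convexity of $\bigsqcup A_j$ follow from Lemma \ref{lem:stability} after shrinking $\theta$, $\tau_{k+1}$ and the $A_4$-radius; disjointness and polynomial convexity of $\bigsqcup q_j(A_j)$ are checked exactly as in Proposition \ref{propgeneral} (using $\phi(\mathcal{W})\subset\subset F_k(\B(Q_0,\theta))$ and the automorphism-invariance of polynomial convexity) together with the observation that $q_3(A_3)\subset\subset A_3$ and $q_4(A_4)$ is a single point inside $\B(S_{k+1},\tau_{k+1})$. Theorem \ref{al} then produces $g_{k+1}$ approximating each $q_j$ on $A_j$ with any prescribed jets at $0$, at $S_{k+1}$ (to guarantee $g_{k+1}(S_{k+1})=S_{k+1}$ and the sharp contraction estimate), at each $S_j$ with $j\le k$, and along all previously constructed finite orbits; by choosing $\delta_k\le 2^{-k}$ small one arranges (c)--(l) at level $k+1$.

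The final step is the surjectivity correction: post-compose with an automorphism $h$ that is nearly the identity on $g_{k+1}(\overline\B(0,R_{k+1}))$ and moves $g_{k+1}(\overline\B(P_{N_{k+1}},\beta_{N_{k+1}}))$ far from $\overline\B(0,r_{k+1})$, again via Theorem \ref{al} with appropriate jet conditions at all $S_j$, at $0$, and along all $(P_j)$ and $(T_n^j)$. Set $F_{k+1}:=h\circ g_{k+1}$; then properties (a) and (b) follow verbatim from the last paragraph of the proof of Proposition \ref{propgeneral}. The main obstacle is the polynomial-convexity bookkeeping: one must simultaneously place the new attracting ball $\overline\B(S_{k+1},\tau_{k+1})$ and the new boundary-orbit point $T_{N_k}^{k+1}$ into an already crowded picture containing the long $\lambda$-Lemma tube and the old attracting data, and then verify that \emph{after} applying the prescribed $q_j$'s (which deform the picture in four different ways) the hypotheses of Theorem \ref{al} still hold. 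This is handled by making all the new radii ($\theta$, $\tau_{k+1}$, the $A_4$-radius, and the approximation tolerances $\delta_k,\delta_k'$) small in the correct order, with Lemma \ref{lem:stability} applied repeatedly.
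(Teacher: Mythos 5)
Your overall architecture (induction, one application of Theorem \ref{al} with several starshapelike pieces, then a surjectivity correction) matches the paper, but there is a genuine gap in how you treat the boundary orbits $T^j$. You take the transported set to be $\mathcal{W}=F_k^{N_k-n_k}(\overline{\B}(P_{n_k},\beta_{n_k}))$ exactly as in Proposition \ref{propgeneral}, and you put $A_4$ equal to a tiny ball around $T_{N_k}^{k+1}$. Since the points $T_0^j$ lie \emph{outside} $\overline{\B}(P_0,1)$, in your scheme nothing controls their orbits between the base case and their redirection step: Theorem \ref{al} gives closeness to $\phi$ only \emph{on} $\mathcal{W}$, and the points $T_{N_k}^j$ ($j\geq k+2$) start arbitrarily close to, but outside, that compact set, where you have no control at all. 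Consequently at step $j$ the point $T_{N_{j-1}}^{j}=F_{j-1}^{N_{j-1}}(T_0^{j})$ could lie inside $K_{j-1}$, inside the $\lambda$-Lemma tube $L$, inside an earlier attracting ball $\overline{\B}(S_i,\tau_i)$, or inside $\mathcal{W}$ itself; in each of these cases a different $q_i$ is already prescribed on a set containing it, the $A_j$'s are not disjoint, and the redirection (hence properties (i), (j), and ultimately the density argument that each $T_0^j$ lies in the basin of its own $S_j$) cannot be arranged. Lemma \ref{lem:stability} does not rescue this: it only applies to points outside the given compact set, and "shrinking the $A_4$-radius" does not help if the point itself sits in another piece. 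The paper's proof contains a specific device you are missing: it transports the larger ball $\mathcal{W}=F_k^{N_k}(\overline{\B}(P_0,\rho_{k+1}))$ with $\|T_0^{k+2}-P_0\|<\rho_{k+1}<\|T_0^{k+1}-P_0\|$ (possible because $\|T_0^j-P_0\|\searrow 1$ strictly). Thus all not-yet-redirected points ride \emph{inside} the controlled set and stay in the shrinking balls $\B(P_n,\beta_n)$, while the point being redirected now lies outside $\mathcal{W}$ by injectivity of $F_k^{N_k}$ yet inside $\B(P_{N_k},\beta_{N_k})$, a location known to be disjoint from $K_k$, the tube and the attracting balls, so it can legitimately serve as $A_4$.

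A secondary deviation: you place the new attracting center $S_{k+1}$ "far from everything", as in Proposition \ref{propescape}, whereas the paper sets $S_{k+1}:=Q_0$, $\tau_{k+1}:=\theta$ and introduces the extra orbit point $Q_{-1}$ with $\B(Q_0,\theta)\subset\subset F_k(\B(Q_{-1},\eta))$ precisely so that $B_2\cup B_4\subset\B(Q_0,\theta)$, $B_3\subset F_k(\B(Q_0,\theta))$ and $B_1=F_k(A_1)$ are the $F_k$-image of a configuration already known to be disjoint and polynomially convex. Your variant might be repairable, but as stated the image-side check is not done: $\overline{\B}(S_{k+1},\tau_{k+1})$ must avoid all of $F_k(A_1)$ (not just $F_k(K_k)$ and the tube), and $S_{k+1}$ must still lie in $\B(0,R_{k+1})$ so that the new attracting ball ends up in $K_{k+1}$, which is what allows the contraction estimates to survive all later approximations (the paper's remark that the balls $\overline{\B}(S_j,\tau_j)$, $j\leq k$, are contained in $K_k\subset A_1$), and so that $(S_j)$ stays bounded while being bounded away from the origin.
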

\begin{remark} In the proposition above properties $(a)$ and $(b)$ imply that the sequence $F_k$ converges uniformly on compacts to an automorphism $F$. Properties $(c)$---$(g)$ ensure the existence of an oscillating wandering domain for $F$ and properties $(h)$---$(l)$ ensure that the $F$-orbit of every point $T_0^j$  converges to an attracting fixed point $S_j$.
\end{remark}
 
Before proving this proposition, let us show how it can used to prove our main theorem. 

\subsection{Proof of Theorem \ref{thmoscillating}} Let $(F_k)_{k\geq 0}$ be a sequence of holomorphic automorphisms of $\C^m$ given by Proposition  \ref{propmain}. This sequence converges uniformly on compacts to a holomorphic automorphism $F$ of $\C^m$. Moreover there exists a disjoint sequences of points $(P_n)_{n\geq 0}$,  $(T^j_n)_{j \geq 1, n \geq 0 }$, $(S_j)_{j\geq1}$, sequences positive real numbers $(\beta_n)_{n\geq 0}\searrow 0$,  $(\tau_n)_{n\geq 1}\searrow 0$, strictly increasing sequences of integers  $(n_k)_{k\geq 0}$ and $(N_k)_{k\geq 0}$ such that the following holds:
\begin{enumerate}
\item $F^j(P_0)=P_j$ for all $j\geq 0$,
\item $P_{n_k}\rightarrow 0$ and  $P_{N_k}\rightarrow\infty$ as $k\rightarrow\infty$,
\item $F^j(\B(P_0,1))\subset \subset\B(P_j,\beta_j)$ for all $j\geq 0$,
\item $F(S_j)=S_j$ and $\|F(z)-S_j\|\leq\frac{1}{2}\|z-S_j\|$ on $\overline{\B}(S_j,\tau_j)$  for all $j\geq 1$
\item Points $T^j_0$ accumulate densely on the $b\B(P_0,1)$ and  $F^k(T^j_0)\rightarrow S_j$ as $k\rightarrow\infty$ for all $j\geq 1$.
\end{enumerate}
The remaining argument is similar as in the proof of Theorem \ref{thmescaping}.  By the property $(3)$ the Euclidean  diameter of $F^j(\B(P_0,1))$ is bounded for all $j\geq 0$, hence it follows that $\B(P_0,1)$ is contained in some Fatou component $\mathcal{F}_0$.

 Assume that $\mathcal{F}_0 \neq \B(P_0,1)$. Since  $\mathcal{F}_0$ is an  open set  there exists $j>0$ so that $T_0^j\in \mathcal{F}_0$. Properties $(4)$ and $(5)$ imply that on a small neighbourhood of $T_0^j$ the sequence of iterates $(F^n)$ converges to a constant $S_j$. On the other hand the properties $(2)$ and $(3)$ imply that on  $\B(P_0,1)$ the sequence of iterates $(F^{n_k})$ converges to $0\neq S_j$, hence we are in the contradiction. 
 
Oscillation of the Fatou component $\mathcal{F}_0 = \B(P_0,1)$ follows directly from the properties $(2)$ and $(3)$.  $\square$

\subsection{Proof of Proposition \ref{propmain}}\label{inductionprop}
We prove this proposition by induction on $k$.

\medskip
{\bf Base case:} We start the induction by letting
 $$
F_0(z_1,\ldots, z_m)=(\frac{1}{2}z_1,\ldots,\frac{1}{2}z_\iota,2z_{\iota+1},\ldots,2z_m)
$$ 
for some $1\leq \iota<m$. Let  $R_0=1$ and let $r_0>0$ be so large that $F_0(\B(0,R_0))\subset\subset \B(0,r_0)$. Define $K_0=F_0^{-1}(\overline{\B}(0,r_0))$, $n_0=N_0=0$, $\beta_0=2$, and choose any point $P_0$ so that the sets $\overline{\B}(P_0,\beta_0)$ and $K_0$ are disjoint and their union is polynomially convex.  Finally choose a sequence $(T_0^j)_{j\geq 1}\subset \B(P_0,2)\backslash \overline{\B}(P_0,1)$ which  accumulates densely on $b\B(P_0,1)$ and for which the sequence of distances  $\|T_0^j-P_0\|\rightarrow 1$ is strictly decreasing. Observe that in this setting all conditions $(a)$---$(l)$ are satisfied for $k =0$.

\medskip

{\bf Induction hypothesis:} Let us suppose that conditions $(a)$---$(l)$ hold for certain $k$ and that for $K_k:=F^{-1}_k(\overline{\B}(0,r_{k}))$ we have:
\begin{itemize}
\item  $\overline{\B}(0,R_{k})\subset K_k$
\item $K_k\cap \overline{\B}(P_{N_k},\beta_{N_k})=\emptyset$ 
\item $K_k\cup \overline{\B}(P_{N_k},\beta_{N_k})$ is polynomially convex.
\end{itemize}
 
\medskip 
{\bf Inductive step:}
 Let us proceed with the constructions satisfying the conditions for $k+1$. First let  $R_{k+1}>\|P_{N_k}\|+1$ such that $K_k\subset\B(0,R_{k+1})$. By the $\lambda$-Lemma there exist a finite $F_k$ orbit $(Q_j)_{-1\leq j \leq M}$, i.e. $F_k(Q_{j-1})=Q_j$ for $0\leq j\leq M$, such that:
\begin{enumerate}
\item[$(\mathcal{A}1)$] $\|Q_{j}\|<R_{k+1}$ for all $-1\leq j<M$,
\item[$(\mathcal{A}2)$] $\|Q_{M}\|>R_{k+1}$,
\item[$(\mathcal{A}3)$] $\|Q_{\ell}\|<\frac{1}{k+1}$ for some $0<\ell<M$.

\end{enumerate}
By increasing  $R_{k+1}$ if necessary, we can choose $0<\theta\leq \eta<\frac{1}{k+1}$ so that: 
\begin{enumerate}[label=(\roman*)]
\item[$(\mathcal{B}1)$] the ball $\overline{\B}(Q_M, \theta)$ is disjoint from $\overline{\B}(0, R_{k+1})$,

\item[$(\mathcal{B}2)$] $\B(Q_0, \theta)\subset\subset F_k(\B(Q_{-1}, \eta))$ 
\item[$(\mathcal{B}3)$] the balls
\begin{equation*}\overline{\B}(P_{N_k},\beta_{N_k}),\qquad \overline{\B}(Q_0, \theta), \qquad \overline{\B}(Q_M, \theta),\qquad \overline{\B}(Q_{-1}, \eta) 
\end{equation*}
are pairwise disjoint, and disjoint from the set
\begin{equation}\label{defl}
L:=K_k\cup \bigcup_{0< i<M} \overline{\B}(Q_i,\theta),
\end{equation}
and their union with $L$ is a polynomially convex set  (see Lemma \ref{lem:stability}),
\item[$(\mathcal{B}4)$]  $\B(P_{N_k},\beta_{N_k})\cup{\B}(Q_0, \theta)\cup \B(Q_{-1}, \eta) \cup L\subset\subset \B(0, R_{k+1})$.
\end{enumerate}

%\begin{remark}{\color{red} It follows directly from the construction of a new orbit that $Q_j\neq P_{N_k}$ for all $0< j \leq M$ and $Q_{-1}:=F_k^{-1}(Q_0)\neq P_{N_k}$. Since $F_k$ is a one to one map and $F_k(Q_j)=Q_{j+1}$ for all $-1\leq j < M$ and $F_k(P_j)=P_{j+1}$ for all $0\leq j<N_k$ we can see that $\{Q_0,\ldots,Q_M\}\cap \{P_j\}_{0<j\leq N_k}=\emptyset$. Finally since $Q_j\neq P_{N_k}$ for all $0\leq j \leq M$ we also obtain $Q_j\neq P_{0}$ for all $0\leq j \leq M$. This proves that we have not produced any periodic cycles.}
%\end{remark}

By continuity of $F_k$ there exists
$0<s_\ell<\theta$ small enough such that  for all $0\leq j\leq M-\ell$, $$F_k^j(\B(Q_\ell,s_\ell))\subset \subset \B(Q_{\ell+j},\theta)$$ and such that for all $0\leq j\leq \ell$,
\begin{equation}\label{property}F_k^{-j}(\B(Q_\ell,s_\ell))\subset\subset \B(Q_{\ell-j}, \theta).
\end{equation}

Choose $1<\rho_{k+1}<2$ such that $\|T_0^{k+2}-P_0\|<\rho_{k+1}<\|T_0^{k+1}-P_0\|$ and define a starshapelike compact set 
$$\mathcal{W}:=F_k^{N_{k}}(\overline \B(P_{0},\rho_{k+1}))\subset \B(P_{N_k},\beta_{N_k}).$$

 Next define linear automorphisms 
 \begin{equation}\label{phi1}
 \Phi_{1}(z)=\frac{z-P_0}{2\rho_{k+1}},\qquad \Phi_{2}(z)=s_{\ell}\cdot z+Q_\ell
 \end{equation}
   and an automorphism 
\begin{equation}\label{phi2}
\phi= F_k^{-\ell+1}\circ\Phi_{2}\circ\Phi_{1}\circ F_k^{-N_k}.
\end{equation}
Observe that the following holds:
\begin{itemize}
\item[$(\mathcal{C}1)$] $\phi(P_{N_k})=Q_1$,
\item[$(\mathcal{C}2)$] $\phi(W)\subset\subset F_k( \B(Q_{0}, \theta))$,
\item[$(\mathcal{C}3)$] $F_k^j(\phi(\mathcal{W}))\subset\subset \B(Q_{j+1}, \theta)$ for all $0\leq j<\ell-1$,
\item[$(\mathcal{C}4)$] $F_k^{\ell-1}(\phi(\mathcal{W}))\subset\subset \B(Q_{\ell}, s_{\ell})$.
\end{itemize}
Let us write $S_{k+1}:=Q_0$ and $\tau_{k+1}:=\theta$. In the terminology of Theorem  \ref{al} we define 
\begin{equation}\label{A}
A_1:=L\cup  \overline{\B}(Q_M, \theta),\quad A_2:= \overline{\B}(S_{k+1}, \tau_{k+1}),\quad A_3:= \mathcal{W},\quad A_4:= T_k^{k+1}.
\end{equation}
Note that all previously constructed balls $\overline{\B}(S_{j},\tau_{j})$ for $j\leq k$ are contained in the compact set $K_k$ and therefore in $A_1$.  
By the property $(\mathcal{B}3)$ sets $A_j$ are pairwise disjoint and that their union is polynomially convex. Furthermore observe that the sets $A_2$, $A_3$ and $A_4$ are all starshapelike.
Next we define $q_1(z):=F_k(z)$ on $A_1$, $q_2(z):= \frac{(k+1)}{2(k+2)+1}(z-S_{k+1})+S_{k+1}$ on $A_2$, $q_3(z)=\phi(z)$ on $A_3$ and $q_4(z)=z-T_k^{k+1}+Q_0+\frac{2\tau_{k+1}}{3}$ on $A_4$. 

 We claim that their images 
$B_j:=q_j(A_j)$ where $1\leq j  \leq4$ are also pairwise disjoint and their union is polynomially convex. 

First observe that $B_2$ and $B_4$ are disjoint and  contained in $\B(Q_{0}, \theta)\subset F_k(\B(Q_{-1}, \eta))$ and their union is polynomially convex.  Next observe that  \eqref{property} implies $F^{-1}_k(B_3)\subset F_k^{-\ell}(\B(Q_\ell,s_\ell))\subset\subset \B(Q_{0}, \theta)$, hence $B_3\subset F_k(\B(Q_{0}, \theta))$. Our claim now follows directly form the property $(\mathcal{B}3)$. 
\medskip

By Theorem  \ref{al} there exists an automorphism $g_{k+1}$ such that

\begin{enumerate}[label=(\Roman*)]
\item $g_{k+1}(0)=0$ and $d_0g_{k+1}=d_0F_k$,
\item $\|F_k-g_{k+1}\|\leq \delta_k$ on $L\cup\overline\B(Q_M,\theta)$
\item $g_{k+1}(P_j)=F_k(P_j)$ for all $0\leq j<N_k$,
\item $g_{k+1}(Q_j)=F_k(Q_j)$ for all $1\leq j<M$,
\item $g_{k+1}(P_{N_k})=Q_1$
\item $\|\phi-g_{k+1}\|\leq \delta_k$ on $\mathcal{W}$.
\item $g_{k+1}(S_j)=S_j$  for all $1\leq j\leq k+1$
\item $\|g_{k+1}(z)-S_{k+1}\|\leq \frac{k+1}{2(k+2)+\frac{1}{k+1}}\|z-S_{k+1}\|$ on $\overline{\B}(S_{k+1},\tau_{k+1})$,
\item $g_{k+1}(T_n^j)=F_k(T_n^j)$ for all $1\leq j \leq k$ and $0 \leq n\leq N_{j-1}$,
\item for all $0 \leq n< N_{k}$ we have $g_{k+1}(T_n^{k+1})=T_{n+1}^{k+1}$, where  $T_{n}^{k+1}:=F_k^n(T_0^{k+1})$
\item $T_{N_k+1}^{k+1}:=g_{k+1}(T_{N_k}^{k+1})\in \B(S_{k+1},\tau_{k+1})\backslash \overline{\B}(S_{k+1},\frac{\tau_{k+1}}{2})$ 
%\item  $g_{k+1}(T_n^{k+2})=T_{n+1}^{k+2}$ for all $0 \leq n <  N_k+M$  where $T_{n}^{k+2}:=F_k^n(T_0^{k+2})$, 

 \end{enumerate} 
where we have chosen  $\delta_k\leq \frac{1}{2^k}$ small enough such that:
\begin{enumerate}[label=(\roman*)]
\item $g_{k+1}^j(\B(P_0,\rho_{k+1}))\subset\subset  \B(P_j,\beta_j)$, for all $1\leq j\leq N_k$
\item $g_{k+1}^{N_k+j}(\B(P_0,\rho_{k+1}))\subset\subset  \B(Q_{j}, \theta),$ for all    $0< j\leq M$
\item $g_{k+1}^{N_k+\ell}(\B(P_0,\rho_{k+1}))\subset\subset  \B(Q_{\ell}, s_\ell),$ 
\item $\|g_{k+1}(z)-S_{j}\|\leq \frac{k+1}{2(k+2)+\frac{1}{k+1}}\|z-S_{j}\|$ on $\overline{\B}(S_{j},\tau_{j})$ for all $1\leq j\leq k$,
\end{enumerate}

To make sure that the newly constructed automorphism satisfies property $(a)$ of the proposition we need to correct $g_{k+1}$ by pre-composing it with an appropriate automorphism. 

Let  $r_{k+1} >r_k+1$  such that 
$$g_{k+1} (\B(0, R_{k+1})) \subset\subset \B(0, r_{k+1}).$$
 Since the compact sets  $\overline{\B}(Q_{M},\theta)$ and $\overline{\B}(0, R_{k+1})$  are disjoint starshapelike domains whose union is polynomially convex  the same holds for  their images
 $$U:=g_{k+1}(\overline{\B}(Q_{M},\theta)), \qquad V:=g_{k+1}(\overline{\B}(0, R_{k+1})).$$
  Let $Q'\in \C^m$ be a point  such that the ball $\overline{\B}(Q',\theta)$ lies in the complement of  $\overline{\B}(0, r_{k+1})$. Moreover let $\psi$ be a linear map satisfying $(\psi\circ g_{k+1})(Q_M)=Q'$ and
 $$ \psi(U)\subset {\B}(Q',\theta).$$
 
  By Theorem  \ref{al} there exists  an automorphism $h$ such that

\begin{enumerate}[label=(\Roman*)]
\item $\|h-{\rm id}\|_V\leq \delta'_k$,
\item $\|h-\psi\|_U\leq \delta'_k$,
\item  $h(Q_j)=Q_j$ for all $0< j\leq M$,
\item $(h\circ g_{k+1})(Q_M)=Q'$
\item $h(S_j)=S_j$ for all $0< j\leq k+1$,
\item $h(0)=0$, $d_0h={\sf id}$, $h(P_j)=P_j$ for all $1\leq j\leq P_{N_k}$,

\item $h(T_n^{j})=T_n^{j}$ for all $1\leq j\leq k+1$ and $1\leq n\le N_{j-1}+1 $
%\item $h_{k+1}(T_n^{k+1})=T_n^{k+1}$ for all $1\leq n\leq N_{j-1}+ $

 \end{enumerate} 
where we have chosen  $\delta_k'\leq \frac{1}{2^k}$ small enough such that
\begin{enumerate}[label=(\roman*)]
\item $(h\circ g_{k+1})^j(\B(P_0,\rho_{k+1}))\subset\subset  \B(P_j,\beta_j)$, for all $1\leq j\leq N_k$
\item $(h\circ g_{k+1})^{N_k+j}(\B(P_0,\rho_{k+1}))\subset\subset  \B(Q_{j}, \theta),$ for all    $0< j\leq  M$
\item $(h\circ g_{k+1})^{N_k+\ell}(\B(P_0,\rho_{k+1}))\subset\subset  \B(Q_{\ell}, s_\ell),$ 
\item $\|(h\circ g_{k+1})(z)-S_{j}\|\leq \frac{k+1}{2(k+2)}\|z-S_{j}\|$ on $\overline{\B}(S_{j},\tau_{j})$ for all $1\leq j\leq k+1$,
\end{enumerate}

\medskip

Similarly as in the proof of Proposition \ref{propgeneral} we define $F_{k+1}:=h\circ g_{k+1}$, so that the sequences of points 
$
(P_j)_{0\leq j\leq N_k}$, $(Q_j)_{0< j\leq M}
$
together form the start of an $F_{k+1}$-orbit, t.i. $F_{k+1}^j(P_0)=P_j$ for $j\leq N_k+M$ where $P_{N_k+j}=Q_j$ for $ 0< j \leq M$.

Set $n_{k+1}:=N_k+\ell$ and $N_{k+1}:=N_k+M$. Define  $\beta_j:=\theta$ for $N_k< j< n_{k+1}$ and for $n_{k+1}<j\leq N_{k+1}$ and $\beta_{n_{k+1}}:=s_\ell$. 

Finally we define $K_{k+1}:=F^{-1}_{k+1}(\B(0,r_{k+1}))$ and observe that  $\B(0,R_{k+1})\subset K_{k+1}$. Since  $$F_{k+1}(\B(P_{N_{k+1}},\beta_{N_{k+1}}))\subset\subset \B(Q',\theta),$$ where  $P_{N_{k+1}}=Q_M$ and since the set
$\B(0,r_{k+1}) \cup  \B(Q',\theta)$ 
is polynomially convex it follows that $K_{k+1}$ and $\overline{\B}(P_{N_{k+1}},\beta_{N_{k+1}})$   are disjoint and their union is polynomially convex.

\medskip

It is immediate that properties $(c)$---$(l)$ are satisfied for the $(k+1)$-th step.  The properties $(a)$ and $(b)$ can be verified by following the last paragraph of the proof of Proposition \ref{propgeneral} verbatim. This concludes the inductive step. $\square$

\medskip

\subsection{Concluding remarks} In Theorem \ref{thmescaping} and Theorem \ref{thmoscillating} the term unit ball can be replaced by any bounded regular open set $\Omega\subset \C^m$ whose closure is polynomially convex.
The simplest example of such set is a bounded convex domain in $\mathbb{C}^n$. Topologically non-trivial examples  can be constructed in the following way. It is well known that any totally real compact manifold $M\subset\mathbb{C}^n$ of dimension $k<n$ can be smoothly perturbed so that its perturbation $M'$ is totally real compact manifold which is polynomially convex, in particular $M'$ has the same topology as $M$. By taking an appropriate tubular neighbourhood of $M'$ we obtain an open set with desired properties (see \cite[Section 3.]{For} and \cite[Theorem 4.13.8]{For}). 
This provides a rich variety of wandering domains which are topologically non-equivalent.  
\medskip 

Here we explain how can one adapt the proof of Proposition \ref{propmain} to include also these domains and note that similarly can be done for the proof of Proposition \ref{propescape}.

 Since each compact polynomially convex set admits a basis of Stein neighbourhoods that are Runge in $\C^m$, there exists a decreasing sequence of compact polynomially convex neighbourhoods $(U_k)$ of $\overline{\Omega}$.

In the above proof we simply replace the role of $\B(P_0,1)$ with $\Omega$ and  $\overline{\B}(P_0,\rho_{k+1})$  with $U_{k+1}$ and choose a point $\tilde{P}_0\in \Omega$.  
Recall that the automorphism $\Phi_1$ defined in \eqref{phi1} and used in \eqref{phi2} maps $\overline{\B}(P_0,\rho_{k+1})$ into $\B(0,1)$ with $\Phi_1(P_0)=0$. We replace this with a automorphism $\tilde{\Phi}_1$ which maps $U_{k+1}$ into $\B(0,1)$ and  satisfies $\tilde{\Phi}_1(\tilde{P}_0)=0$. 

By choosing $\theta$ sufficiently small we may assume that for every $0<j<M$ the ball $\overline{\B}(Q_j,\theta)$ is either contained in $K_k$ or else they are disjoint, hence set $L$ defined in \eqref{defl} is a union of starshapelike domains. Finally in \eqref{A} we get finitely many sets $A_j$, so  that all but one are starshapelike. The only one that might not be starshapelike is the set $\mathcal{W}=F^{N_k}(U_{k+1})$. The rest of the proof follows verbatim.

\subsection*{Acknowledgements}
 I wish thank the referee for thoughtful remarks which helped me to improve the presentation.
Research was supported by the research program P1-0291 from ARRS, Republic of Slovenia

\end{document}